\DeclareMathOperator{\G}{G}
\DeclareMathOperator{\GL}{GL}
\DeclareMathOperator{\jac}{Jac}
\DeclareMathOperator{\trace}{trace}
\theoremstyle{plain}
\newtheorem{theorem}{Theorem}
\newtheorem{lemma}[theorem]{Lemma}
\newtheorem{corollary}[theorem]{Corollary}
\newtheorem{proposition}[theorem]{Proposition}
\theoremstyle{definition}
\newtheorem{definition}[theorem]{Definition}
\newtheorem{remark}[theorem]{Remark}
\newcommand{\Z}{\mathbb{Z}}
\newcommand{\Q}{\mathbb{Q}}
\newcommand{\R}{\mathbb{R}}
\newcommand{\C}{\mathbb{C}}
\newcommand{\F}{\mathbb{F}}
\newcommand{\calO}{\mathcal{O}}
\newcommand{\mm}{\mathfrak{m}}
\newcommand{\ndiv}{\nmid}
\newcommand{\univ}{\mathrm{univ}}
\newcommand{\TT}{\mathbb{T}}
\newcommand{\norm}{\mathrm{Norm}}
\newcommand{\frob}{\mathrm{Frob}}
\title[Level lowering modulo prime powers]{Level lowering modulo prime powers and twisted Fermat
equations}
\author{Sander R. Dahmen}
\address{Department of Mathematics, The University of British Columbia,
Room 121, 1984 Mathematics Road, Vancouver, B.C., Canada V6T 1Z2}
\email{dahmen@math.ubc.ca}
\author{Soroosh Yazdani}
\address{Department of Mathematics \& Statistics, McMaster University, 1280
Main Street, West Hamilton, Ontario, Canada L8S 4K1}
\email{syazdani@math.mcmaster.ca} \subjclass[2010]{Primary 11D41,
11F33; Secondary 11F11, 11F80, 11G05} \keywords{Modular forms,
level lowering, Diophantine equations}
\date{August 31, 2010}
\begin{document}

\begin{abstract}
    We discuss a clean level lowering theorem modulo prime powers
    for weight $2$ cusp forms.
    Furthermore, we illustrate how this can be used to completely
    solve certain twisted Fermat equations
    $ax^n+by^n+cz^n=0$.
\end{abstract}

\maketitle

%%%%%%%%%%%%%%%%%%%%%%%%%%%%%%%%%%%%%%%%%%%%%%%%%%%%%%%%%%%%%%%%%%%
\section{Introduction}
%%%%%%%%%%%%%%%%%%%%%%%%%%%%%%%%%%%%%%%%%%%%%%%%%%%%%%%%%%%%%%%%%%%

Since the epoch making proof of Fermat's Last Theorem
\cite{Wiles95}, \cite{TaylorWiles95}, many Diophantine problems
have been resolved using the deep methods developed for FLT and
extensions thereof. One of the basic tools involved are so-called
level lowering results, see e.g. \cite{Ribet90}, \cite{Ribet94}.
These provide congruences between modular forms of different
levels. Until now, all applications of the modular machinery to
Diophantine equations only involved level lowering modulo primes.
Although recently a level lowering result modulo prime powers has
been established \cite{DieulefaitTaixes09}, the statements there are not
very fit for applications to Diophantine equations. The purposes
of this paper are twofold. First of all, we give a clean level
lowering result modulo prime powers that is suitable for
applications to Diophantine equations. Second, we illustrate how
this result can be applied, by completely solving certain twisted
Fermat equations, i.e. Diophantine equations of the form
\[ax^n+by^n+cz^n=0 \quad x,y,z,n \in \Z,\ xyz\not=0,\ n>1\]
where $a,b,c$ are nonzero integers. For the twisted Fermat
equations we consider, the genus one curve defined by
$ax^3+by^3+cz^3=0$ has infinitely many rational points, the curve
defined by $ax^9+by^9+cz^9=0$ has points everywhere locally, and
level lowering modulo $3$ also does not give enough information to
deal with the exponent $n=9$ case. The main application of our
level lowering modulo prime powers theorem is then to use level
lowering modulo $9$ to deal with the exponent $n=9$ case.

The organization of this paper is as follows. In Section
\ref{section level lowering} the level lowering result, Theorem
\ref{thm EllipticLevelLowering}, is stated and proved. In Section
\ref{section irreducibility} we mainly deal with some issues
related to irreducibility of mod $3$ representations. In Section
\ref{section GFE} we solve some twisted Fermat equations using
level lowering modulo primes and level lowering modulo $9$.
Finally, in Section \ref{section other methods} we quickly discuss other
possible methods to attack the twisted Fermat equation for
exponent $n=9$, and we prove that standard level
lowering modulo $3$ methods can never work for our examples.

%%%%%%%%%%%%%%%%%%%%%%%%%%%%%%%%%%%%%%%%%%%%%%%%%%%%%%%%%%%%%%%%%%%

\section{Level lowering modulo prime powers}\label{section level lowering}
Let $N$ be a positive integer and
$S_2(\Gamma_0(N))$ denote the space of cuspidal modular forms
of weight $2$ with respect to $\Gamma_0(N)$.
For any Hecke eigenform $f \in S_2(\Gamma_0(N))$,
denote by $K_f$ the field of definition of the Fourier coefficients
of $f$, and by $\calO_f$ its ring of integers.
Note that the image of $f$ under different embeddings
of $K_f \rightarrow \C$ gives conjugate Hecke eigenforms in
$S_2(\Gamma_0(N))$. As such, treating $K_f$ as an abstract number
field and $f$ a modular form with Fourier coefficients in $K_f$ is
akin to looking at $f$ and all its Galois conjugates at the same
time. We say $f$ is a {\em newform class}
of level $N$ if $f \in K[ [ q]]$ for a number field $K$,
and the image of $f$ under each (equiv. under any) embedding of $K
\rightarrow \C$ is a normalized Hecke newform in $S_2(\Gamma_0(N))$.
The {\em degree} of the newform class
$f$ is the degree of the number field $K_f$.
Denote by $G_\Q$ the absolute Galois group of $\Q$.
Let $f$ be a newform class of level $N$.
Given a prime $\lambda \subset \calO_f$ lying above $l$, we
can construct (see for example \cite{Shimura94}) a Galois
representation
\begin{equation*}
    \rho^f_{\lambda^r} : G_\Q \rightarrow \GL_2(\calO_f/\lambda^r)
\end{equation*}
for which
\begin{itemize}
    \item $\rho^f_{\lambda^r}$ is unramified away from $Nl$,
    \item $\trace(\rho^f_{\lambda^r}(\frob_p)) \equiv a_p(f) \pmod{\lambda^r}$ and
        $\norm(\rho^f_{\lambda^r}(\frob_p)) \equiv p \pmod{\lambda^r}$ for
        all primes $p \ndiv Nl.$
\end{itemize}
We remark that when $\rho_{\lambda}^f$ is absolutely irreducible, then
$\rho_{\lambda^r}^f$ is uniquely determined (up to change of basis)
for all positive integers $r$ by the congruences above.

Let $E/\Q$ be an elliptic curve of conductor $N$ and minimal
discriminant $\Delta$. Let
\[\rho^E_{l^r}:G_\Q \rightarrow \GL_2(\Z/{l^r}\Z)\]
be the Galois representation coming from the natural Galois action
of $G_\Q$ on $E[l^r](\overline{\Q})$. Assume that $N=N_0 M$ with
$N_0,M \in \Z_{>0}$ and
that there is an odd prime $l$ such that
\begin{itemize}
    \item $N_0$ and $M$ are coprime,
    \item $M$ is square free,
    \item for all primes $p | M$ we have $l|v_p(\Delta)$,
    \item $E[l]$ is irreducible (i.e. $\rho^E_{l}$ is an irreducible Galois
        representation).
\end{itemize}
Then by Ribet's level lowering (\cite{Ribet90}, \cite{Ribet94})
there is a newform class
of level $N_0$ and prime $\lambda \subset \calO_{f}$ lying above $l$
such that
\[ \rho^E_{l} \simeq \rho^f_{\lambda} \]
as Galois representations, or equivalently that $a_p(E) \equiv a_p(f) \pmod
\lambda$ for all primes $p \ndiv N_0l$.

It is natural to ask what happens when $l^r | v_p(\Delta)$ for all
$p | M$ (see \cite{MSRIWorkshop}). The situation in this case is
more complicated. We first need to assume that $E[l]$ is {\em
strongly irreducible} to get around some technical issues with
deformation theory.
\begin{definition}\label{defn strong irreducibility}
    We say a $2$-dimensional Galois representation $\rho$ of $G_\Q$
    is {\em strongly irreducible}, if $\rho|_{G_{\Q(\sqrt{l^*})}}$
    is absolutely irreducible for $l^* = (-1)^{(l-1)/2} l$.
\end{definition}
As noted in \cite{DieulefaitTaixes09}, using results of
\cite{Ribet97}, when $l \geq 5$ and $E$ is
semistable at $l$, then $\rho_l^E$ is strongly irreducible if it
is irreducible. We will deal with the case $l=3$ for elliptic
curves with full rational $2$-torsion in Section
\ref{section irreducibility}.

We also need to assume that there is a unique newform class $f$
and an unramified prime ideal $\lambda$ at level $N_0$ to get the
desired level lowering results.
\begin{theorem} \label{thm EllipticLevelLowering}
    Let $E/\Q$, $N_0$, $M$, $l$ be as above.
    Assume that
    \begin{itemize}
        \item there is a positive integer $r$ such that for all primes
            $p|M$ we have $l^r | v_p(\Delta)$,
        \item for all primes $p | N_0$ we have $l \ndiv v_p(\Delta)$,
        \item $l^2 \ndiv N$,
        \item $E[l]$ is strongly irreducible,
        \item there is a unique pair $(f,\lambda)$ with
            $f$ a newform class of level $N_0$ and $\lambda \subset \calO_f$
            an unramified prime lying above $l$ such that
            $\rho_l^E \simeq \rho_\lambda^f$.
    \end{itemize}
    Then $\rho^E_{l^r} \simeq \rho^f_{\lambda^r}$.
    In particular, if all of the above assumptions are satisfied, then
    \begin{enumerate}[label=(\roman{*}), ref=(\roman{*})]
    \item \label{eqn congruence 1}
        for all primes $p$ with $ p \nmid lN$
        \begin{equation*}
            a_p(f) \equiv a_p(E) \pmod{\lambda^r},
        \end{equation*}
    \item \label{eqn congruence 2}
        for all primes $p$ with $p \nmid N_0 l$ and $p | N$
        \begin{equation*}
            a_p(f) \equiv a_p(E) (1+p) \equiv \pm (1+p) \pmod{\lambda^r}.
        \end{equation*}
    \end{enumerate}
\end{theorem}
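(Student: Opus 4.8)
The strategy is a deformation-theoretic argument: realize both $\rho^E_{l^r}$ and $\rho^f_{\lambda^r}$ as deformations of $\bar\rho := \rho^E_l \simeq \rho^f_\lambda$ to a suitable deformation ring, and show that ring is small enough (after imposing the right local conditions) that the two deformations must coincide. First I would set up the local conditions at each bad prime. Away from $lN$ both representations are unramified with the same characteristic polynomials of Frobenius mod $\lambda$, so they are genuine deformations of $\bar\rho$ with the same determinant (the $l$-adic cyclotomic character mod $\lambda^r$). At primes $p \mid N_0$, the hypothesis $l \nmid v_p(\Delta)$ forces $\rho^E_l$ to be ramified at $p$ in a way that is rigid: $\bar\rho|_{I_p}$ is already non-trivial unipotent, and since $p \neq l$ the deformation functor at $p$ is smooth with the "same level" (Steinberg-type) local condition being the unique one lifting it — this is where one uses $l^2 \nmid N$ to control the prime $l$ itself and ensures no wild ramification complications. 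At primes $p \mid M$, the hypothesis $l^r \mid v_p(\Delta)$ is exactly what makes the mod-$\lambda^r$ representation unramified at $p$ (the $p$-adic valuation of $\Delta$ appearing in the formula for the image of a generator of tame inertia), so both deformations are unramified at $p$ mod $\lambda^r$.

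Next I would invoke the key rigidity input. Because $E[l]$ is \emph{strongly irreducible}, $\bar\rho|_{G_{\Q(\sqrt{l^*})}}$ is absolutely irreducible, which is precisely the condition needed to apply the deformation-theory results (of the type in \cite{DieulefaitTaixes09}, ultimately resting on \cite{Ribet97} and the $R = \mathbb{T}$ machinery) guaranteeing that the universal deformation ring of $\bar\rho$ with the prescribed local conditions at all primes dividing $N_0 l$, fixed determinant, and unramified-outside-$N_0 l$ condition, is isomorphic to a Hecke algebra acting on $S_2(\Gamma_0(N_0))$ localized at the maximal ideal corresponding to $\bar\rho$. The uniqueness hypothesis — that $(f,\lambda)$ is the \emph{unique} pair with $\rho^E_l \simeq \rho^f_\lambda$ and $\lambda$ unramified — together with $\lambda$ being unramified, says that this localized Hecke algebra, after completing, is étale over $\mathbb{Z}_l$ with a single factor, hence equals $\calO_{f,\lambda}$, the completion of $\calO_f$ at $\lambda$. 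Thus the universal deformation ring is $\calO_{f,\lambda}$, and $\rho^f_\lambda$ is (induced from) the universal deformation.

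Now the conclusion is formal. The representation $\rho^E_{l^r}$ gives, by the local analysis above, a deformation of $\bar\rho$ to $\Z/l^r\Z = \calO_{f,\lambda}/\lambda^r$ satisfying all the imposed local conditions (with the matching determinant), so it is classified by a ring homomorphism $\calO_{f,\lambda} \to \Z/l^r\Z$; since $\calO_{f,\lambda}/\lambda^r \cong \Z/l^r\Z$ and the reduction mod $\lambda$ is forced (it must induce $\bar\rho$), this homomorphism is the canonical surjection, whence $\rho^E_{l^r} \simeq \rho^f_{\lambda^r}$. Then \ref{eqn congruence 1} is immediate from comparing traces of Frobenius mod $\lambda^r$ for $p \nmid lN$. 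For \ref{eqn congruence 2}, take $p \mid N$ with $p \nmid N_0 l$; then $p \mid M$, so both sides are unramified at $p$ mod $\lambda^r$, and $\rho^E_{l^r}(\frob_p)$ has trace $a_p(E) = \pm 1$ and determinant $p$ (good reduction would contradict $p \mid N$; multiplicative reduction gives eigenvalues $\{1,p\}$ or $\{-1,-p\}$ after the relevant twist), so $a_p(f) \equiv \trace \rho^f_{\lambda^r}(\frob_p) \equiv \pm(1+p) \pmod{\lambda^r}$, and $a_p(E)(1+p) \equiv \pm(1+p)$ as well.

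\medskip

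The main obstacle I anticipate is the \emph{identification of the deformation ring with $\calO_{f,\lambda}$} — i.e., checking that the hypotheses (strong irreducibility, $l^2 \nmid N$, the valuation conditions, and especially the uniqueness and unramifiedness of $(f,\lambda)$) are exactly strong enough to invoke the relevant $R = \mathbb{T}$ theorem with the \emph{correct} set of local conditions, and that no extra local obstruction (e.g., at $l$ when $E$ is not assumed semistable there, only $l^2 \nmid N$) spoils smoothness of the local deformation problem. Getting the local condition at $l$ right, and confirming that the cited results of \cite{DieulefaitTaixes09} apply in this generality without requiring $E$ semistable at $l$, is where the real care is needed; the rest is bookkeeping with tame inertia and Frobenius traces.
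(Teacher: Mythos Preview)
Your proposal is correct and follows essentially the same route as the paper: use strong irreducibility to invoke $R^{\univ}=\TT$ (Taylor--Wiles, with Diamond's extension handling the local condition at $l$ under $l^2\nmid N$), use the uniqueness and unramifiedness of $(f,\lambda)$ together with $\calO_f/\lambda=\F_l$ to identify $\TT=\calO_{f,\lambda}\cong\Z_l$, observe that the valuation hypotheses force the Artin conductor of $\rho^E_{l^r}$ to be exactly $N_0$ so that $\rho^E_{l^r}$ is a minimal deformation, and conclude by the uniqueness of the map $\Z_l\to\Z/l^r\Z$. The paper packages the middle step as a separate Proposition but the argument is the same; your anticipated obstacle at $l$ is precisely what \cite{Diamond96} handles.
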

\begin{remark}
    Let us explain the reason for the assumptions made in this theorem.
    We need to assume $l^2 \ndiv N$, since the $R=\TT$ results in this
    situation are not strong enough for our applications.
    The assumption that $\lambda$ is unramified is part of the uniqueness, in
    the sense that if $\lambda$ is ramified, then there are two Hecke
    eigenforms $f_1$ and $f_2$ in the same conjugacy class that are congruent
    to each other. Finally, we are assuming that $l \ndiv v_p(\Delta)$ for
    $p|N_0$. We make this assumption since we want to guarantee that
    the Artin conductor of $\rho_l^E$ is $N_0$ and not something smaller.
    This way, we do not have to deal with oldforms for our analysis.
\end{remark}
\begin{remark}
    A similar theorem, using similar techniques, is proved in
    \cite{DieulefaitTaixes09},
    although the statements of the main result there (specialized
    to our case) assume that $M$ is prime, $N$ is square free, and $l
    \ndiv N$.
    Neither of these assumptions are necessary for the main
    proof, and in fact for applications to Diophantine equations these
    assumptions are usually not fulfilled.
\end{remark}

We will present the proof of Theorem \ref{thm EllipticLevelLowering}
for completeness.
The proof uses standard Taylor-Wiles machinery (\cite{Wiles95},
\cite{TaylorWiles95}, see also \cite{Diamond96}) relating the
deformation ring of modular Galois representations to a particular
Hecke algebra. Specifically, let $f$ be a newform class of
level $N_0$, and let $\lambda \subset \calO_f$ be a
prime lying above $l>2$.
Recall that a lifting of $\rho_\lambda^f$ is a
representation
\[ \xymatrix{ \rho : \G_\Q \ar[r] & \GL_2(R) } \]
where $R$ is a Noetherian complete local ring with the maximal ideal $\mm$
and the residue field $R/\mm = \calO_f/\lambda$ such that $\rho
\equiv \rho_\lambda^f \pmod \mm.$ A deformation of $\rho_\lambda^f$ is an
equivalence class of such lifts. We say that $\rho$ is a {\em minimal
deformation} of $\rho_\lambda^f$ if the ramification types of $\rho$ and
$\rho_\lambda^f$ are the same at all primes $p$.

Assume that $\rho_\lambda^f$ is strongly irreducible and semistable at
$l$. Then we know that there is a universal deformation ring
$R^\univ$ and a universal deformation $\rho^\univ: G_\Q
\rightarrow \GL_2(R^\univ)$ such that every minimal deformation
$\rho_\lambda^f$ is strictly equivalent to a unique specialization of
$\rho^\univ$ under a unique homomorphism $R^\univ \rightarrow R$.
Let $\TT$ be the Hecke algebra acting on $S_2(\Gamma_0(N_0))$,
completed at the maximal ideal corresponding to $\rho_\lambda^f$. If we
assume that $N_0$ is the Artin conductor of $\rho_\lambda^f$, then we have
a surjective map $\Phi:R^\univ \rightarrow \TT$. We have
the following celebrated result.
\begin{theorem}[Taylor-Wiles] \label{thm Taylor-Wiles}
Let $l$ be an odd prime. Assume that $\rho^f_{\lambda}$ is
strongly irreducible. Then
\[ \xymatrix{ \Phi:R^\univ \ar[r] &  \TT } \]
is
an isomorphism and $R^\univ$ is a complete intersection.
\end{theorem}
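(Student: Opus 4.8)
The plan is to establish the isomorphism by the Taylor--Wiles patching method (equivalently, one may run Wiles' numerical criterion for a complete intersection; either endgame works). Write $\calO$ for the completion of $\calO_f$ at $\lambda$, $k=\calO_f/\lambda$ for the residue field, $\bar\rho=\rho^f_\lambda$, and let $\mathrm{ad}^0\bar\rho$ be the trace-zero adjoint. Surjectivity of $\Phi$ is already in hand from the running assumption that $N_0$ is the Artin conductor of $\bar\rho$: the $\mm_\TT$-adic Tate module of $S_2(\Gamma_0(N_0))$ carries a minimal deformation of $\bar\rho$ by Eichler--Shimura, so the Hecke operators $T_p$ ($p\nmid N_0l$) lie in the image of $\Phi$, and $R^\univ$ is generated by traces of Frobenius since $\bar\rho$ is absolutely irreducible. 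The content of the theorem is thus injectivity of $\Phi$ together with the complete-intersection property, and for these one must enlarge the deformation problem.

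\emph{Taylor--Wiles primes and the Galois side.} For each integer $n\ge 1$ one uses Chebotarev to produce a finite set $Q_n$ of auxiliary primes $q$ with $q\equiv 1\pmod{l^n}$, $q\nmid N_0l$, and $\bar\rho(\frob_q)$ having two distinct eigenvalues in $k$, arranged so that $\#Q_n=r$ is independent of $n$ and so that the Selmer group of $\mathrm{ad}^0\bar\rho$ with the ramification condition \emph{relaxed} at the primes of $Q_n$ still has $k$-dimension $r$. The strong-irreducibility hypothesis is exactly what makes this possible: it forces the vanishing of $H^0(G_\Q,\mathrm{ad}^0\bar\rho)$ and of its cyclotomic twist $H^0(G_\Q,\mathrm{ad}^0\bar\rho(1))$, the latter being controlled precisely by the field $\Q(\sqrt{l^*})$ of Definition~\ref{defn strong irreducibility} (cf.\ \cite{Ribet97}), so that the Greenberg--Wiles exact formula yields the stated count of generators. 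Imposing at each $q\in Q_n$ the deformation condition ``unramified, or ramified with $\frob_q$ acting by distinct eigenvalues'' produces a deformation ring $R_{Q_n}$ (minimal away from $Q_n$) which is a quotient of a power series ring $\calO[[x_1,\dots,x_r]]$; the ramification allowed at primes $q\equiv 1\pmod{l^n}$ moreover endows $R_{Q_n}$ with a natural action of $\calO[\Delta_{Q_n}]$, where $\Delta_{Q_n}\cong(\Z/l^n\Z)^r$ is the relevant group of diamond operators, and quotienting by the augmentation ideal of $\Delta_{Q_n}$ recovers $R^\univ$.

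\emph{Modular side and patching.} One checks that this level raising is realized modularly: there is a Hecke algebra $\TT_{Q_n}$ on weight $2$ cusp forms of level $N_0\prod_{q\in Q_n}q$ with auxiliary $\Gamma_1(q)$-structure at the $q$'s, completed at the maximal ideal attached to $\bar\rho$, with a surjection $R_{Q_n}\twoheadrightarrow\TT_{Q_n}$ compatible with $\Phi$; the key facts are that the relevant homology module $M_{Q_n}$ (a localisation of $H_1$ of the modular curve) is free of rank one over $\TT_{Q_n}$ --- via Ihara's lemma and multiplicity one, using irreducibility of $\bar\rho$ --- and that $\TT_{Q_n}$, hence $M_{Q_n}$, is free over $\calO[\Delta_{Q_n}]$, with reduction modulo the augmentation ideal giving back $\TT$ and a rank-one free $\TT$-module $M_\emptyset$. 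Now patch: choosing a cofinal sequence of $n$'s and using compactness of the finite-level data (the number $r$, the residue field, and the local conditions are all bounded), one assembles the diagrams $\calO[[x_1,\dots,x_r]]\twoheadrightarrow R_{Q_n}\twoheadrightarrow\TT_{Q_n}$ and the modules $M_{Q_n}$ into a limit: a surjection $\calO[[x_1,\dots,x_r]]\twoheadrightarrow R_\infty$, a power series ring $S_\infty=\calO[[S_1,\dots,S_r]]$ acting through a Hecke quotient $\TT_\infty$ on an $S_\infty$-free module $M_\infty$, with $R_\infty/(S_i)=R^\univ$, $\TT_\infty/(S_i)=\TT$, $M_\infty/(S_i)=M_\emptyset$. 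A dimension count closes it out: $\dim R_\infty\le 1+r$ since $R_\infty$ is a quotient of $\calO[[x_1,\dots,x_r]]$, while $M_\infty$ free over $S_\infty$ forces $\dim\TT_\infty\ge 1+r$; hence $R_\infty\xrightarrow{\ \sim\ }\TT_\infty$ is finite free over $S_\infty$, so a complete intersection, and $M_\infty$ is free over it. Reducing modulo the regular sequence $(S_1,\dots,S_r)$ gives $\Phi:R^\univ\xrightarrow{\ \sim\ }\TT$, shows $\TT$ (hence $R^\univ$) is a complete intersection, and yields the bonus that $S_2(\Gamma_0(N_0))_{\mm_\TT}$ is free of rank one over $\TT$.

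The main obstacle is the modular input just described: freeness of rank one of the patched homology over the augmented Hecke algebras (Ihara's lemma / multiplicity one) and freeness over the diamond-operator algebras $\calO[\Delta_{Q_n}]$, together with the delicate cohomological bookkeeping that keeps $\#Q_n$ equal to $r$ while still annihilating the relaxed Selmer group. The patching limit and the final dimension count, by contrast, are formal commutative algebra, and the deformation-theoretic setup at $l$ --- imposing the minimal (ordinary or flat) condition, where semistability of $\bar\rho$ at $l$ enters --- is standard.
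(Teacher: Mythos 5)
Your proposal is correct and follows essentially the same route as the paper: the paper proves this theorem by citing Wiles, Taylor--Wiles, Diamond, and DDT, and your sketch is precisely the Taylor--Wiles--Diamond patching argument those references carry out, including the role of strong irreducibility in constructing the auxiliary sets of primes. The one point the paper makes explicitly that you gloss over is the identification $\TT_\emptyset=\TT$, namely that the group $\Gamma_\emptyset$ in the cited sources lies strictly between $\Gamma_0(N_0)$ and $\Gamma_1(N_0)$, and one must observe that the diamond operators act trivially modulo $l$ after localizing at the maximal ideal attached to $\rho^f_\lambda$.
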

\begin{proof}
    For a proof when $\rho_\lambda^f$ is assumed to be semistable see
    \cite{Wiles95}, \cite{TaylorWiles95}, or \cite{DDT07}.
    To prove the result stated here we also need Diamond's strengthening
    \cite{Diamond96}.
    We remark that in all of the above theorems, the statement
    proved is presented as $R_Q = \TT_Q$ where $R_Q$ is a universal deformation
    ring for certain non-minimal deformations and
    $\TT_Q$ is the
    completed Hecke algebra acting on $S_2(\Gamma_Q)$. The case that
    we are using is when $Q$ is the empty set. In this case
    $R_\emptyset=R$, however the group $\Gamma_\emptyset$
    in the loc. cit. lies between $\Gamma_0(N)$ and $\Gamma_1(N)$.
    Fortunately this group is chosen in such a way that the space
    on which
    the diamond operator is acting trivially modulo $l$, is precisely
    $\Gamma_0(N)$. Therefore $\TT_\emptyset = \TT$.
\end{proof}

As pointed out in \cite{DieulefaitTaixes09}, $R=\TT$ results are
the key to proving level lowering statements.
\begin{proposition} \label{prop LevelLowering}
Let $g$ be a newform class of level $N_g$ and degree $1$.
Assume that there is a pair $(f,\lambda)$
with $f$ a newform class of level $N_f$ and $\lambda \subset \calO_f$ an
unramified prime lying above $l$, and a positive integer $r$
such that
\begin{itemize}
    \item $\rho_l^g$ and $\rho_{l^r}^g$ have Artin conductor $N_f$,
    \item $\rho_\lambda^f \simeq \rho_l^g$,
    \item there is no other pair $(f',\lambda')$ with $f'$ a newform
        class of level $N_f$ and $\lambda' \subset \calO_{f'}$ a
        prime lying above $l$ such that
        $\rho_{\lambda'}^{f'} \simeq \rho_l^g$,
    \item $\rho_l^g$ is strongly irreducible.
\end{itemize}
Then $\rho_{l^r}^g \simeq \rho_{\lambda^r}^f$.
\end{proposition}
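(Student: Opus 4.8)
The plan is to deduce Proposition~\ref{prop LevelLowering} from the Taylor--Wiles isomorphism $\Phi:R^\univ\xrightarrow{\sim}\TT$ of Theorem~\ref{thm Taylor-Wiles}, applied with the residual representation $\bar\rho=\rho_l^g=\rho_\lambda^f$ and with level $N_0=N_f$ (which by hypothesis is the Artin conductor of $\bar\rho$, so that $\Phi$ is available). First I would observe that $\rho_{l^r}^g$ is a lift of $\bar\rho$ to $\GL_2(\Z/l^r\Z)$, and that it is a \emph{minimal} deformation: its ramification behaviour agrees with that of $\bar\rho$ at every prime. The only subtlety is semistability/ramification at $l$ itself and at primes dividing $N_f$ — here one uses that $\bar\rho$ and $\rho_{l^r}^g$ both have Artin conductor $N_f$ (this is the second hypothesis, and the reason it is imposed), so no ramification is lost or gained at the bad primes, and the local conditions defining $R^\univ$ are satisfied. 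By the universal property we then get a ring homomorphism $\psi:R^\univ\to\Z/l^r\Z$ classifying $\rho_{l^r}^g$, hence a homomorphism $\TT\cong R^\univ\to\Z/l^r\Z$ via $\Phi^{-1}$.

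Next I would identify what this homomorphism $\TT\to\Z/l^r\Z$ does on Hecke operators. By the defining congruences for $\rho^g_{l^r}$ one has $\trace\rho_{l^r}^g(\frob_p)\equiv a_p(g)\pmod{l^r}$ for $p\nmid N_fl$, while the universal deformation satisfies $\trace\rho^\univ(\frob_p)=\Phi^{-1}(T_p)$; chasing through $\Phi$ shows the composite $\TT\to\Z/l^r\Z$ sends $T_p\mapsto a_p(g)\bmod l^r$. Now the point is that $\TT$, being reduced and $\lambda$-adically complete, decomposes up to finite index as a product over the minimal pairs $(f',\lambda')$ with $\rho_{\lambda'}^{f'}\simeq\bar\rho$; the third hypothesis says there is exactly one such pair, namely $(f,\lambda)$, and since $\lambda$ is \emph{unramified} over $l$ the corresponding local factor of $\TT$ is simply $\calO_{f,\lambda}$, the completion of $\calO_f$ at $\lambda$ — this is exactly where the ``no other pair'' and ``unramified'' hypotheses enter, ruling out congruences between distinct eigenform-components that would otherwise obstruct the conclusion. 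Therefore the homomorphism $\TT\to\Z/l^r\Z$ factors through $\calO_{f,\lambda}\twoheadrightarrow\calO_f/\lambda^r$, and it sends $T_p\mapsto a_p(f)\bmod\lambda^r$.

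Comparing the two descriptions of the same homomorphism on the topological generators $T_p$ ($p\nmid N_fl$) gives $a_p(g)\equiv a_p(f)\pmod{\lambda^r}$ for all such $p$. By Chebotarev together with the Brauer--Nesbitt theorem (using that the semisimplification of $\rho_{l^r}^g$ is determined by traces and determinants, and that $\bar\rho$ irreducible forces $\rho_{l^r}^g$ itself to be determined up to conjugacy by these data, as remarked in the excerpt just after the construction of $\rho^f_{\lambda^r}$), this congruence on traces of Frobenius — together with the matching determinants, both being the mod-$l^r$ cyclotomic character — upgrades to an isomorphism $\rho_{l^r}^g\simeq\rho_{\lambda^r}^f$ of representations into $\GL_2(\calO_f/\lambda^r)$, after the harmless identification $\Z/l^r\Z\hookrightarrow\calO_f/\lambda^r$ coming from $\lambda$ unramified. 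That is the claimed conclusion.

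I expect the main obstacle to be the middle step: carefully justifying that minimality of $\rho_{l^r}^g$ holds at the primes dividing $N_f$ and at $l$, and that the resulting map out of $\TT$ genuinely lands in the single unramified component $\calO_f/\lambda^r$ rather than merely in some Artinian quotient that mixes components. Making this precise requires knowing the local structure of $\TT$ at its maximal ideal — that it is a product of discrete valuation rings indexed by the pairs $(f',\lambda')$ when all the $\lambda'$ are unramified — which in turn is where one really uses all four hypotheses in concert. Once $R^\univ=\TT$ and this structural description of $\TT$ are in hand, the rest is the standard Brauer--Nesbitt/Chebotarev bookkeeping.
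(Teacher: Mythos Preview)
Your approach is essentially the paper's: invoke Theorem~\ref{thm Taylor-Wiles} to get $R^\univ\simeq\TT$, use the Artin-conductor hypothesis to see that $\rho_{l^r}^g$ is a minimal deformation (hence classified by a map $\TT\to\Z/l^r\Z$), and use the uniqueness and unramifiedness hypotheses to identify $\TT$ with $\calO_{f,\lambda}$.

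The one place where the paper is cleaner is the endgame. Since $g$ has degree $1$ and $\rho_\lambda^f\simeq\rho_l^g$, the residue field $\calO_f/\lambda$ is $\F_l$; together with $\lambda$ unramified this gives $\calO_{f,\lambda}=\Z_l$, so your inclusion $\Z/l^r\Z\hookrightarrow\calO_f/\lambda^r$ is actually an equality. Now there is exactly \emph{one} ring homomorphism $\Z_l\to\Z/l^r\Z$, so the map classifying $\rho_{l^r}^g$ coincides with the reduction map classifying $\rho_{\lambda^r}^f$. Universality of $R^\univ$ then says immediately that the two deformations are isomorphic. This bypasses your final paragraph entirely: there is no need to chase traces on $T_p$ or invoke Chebotarev and Brauer--Nesbitt, because the deformation functor already remembers the representation up to equivalence. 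Your route via trace congruences is correct, but it re-proves by hand what universality hands you for free.
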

\begin{proof}
Let $R^\univ$ be the universal deformation ring for the minimal
deformations of $\rho^f_{\lambda}$. By results of
\cite{TaylorWiles95} we get that $R^\univ = \TT$. Since we are
assuming that $g$ has rational integral coefficients and that
$\rho_\lambda^f \simeq \rho_l^g $,
we get that $\calO_f/\lambda = \Z/l = \F_l$. Since
we are also assuming that there is a unique $(f,\lambda)$, and that
$\lambda$ is unramified, we get $\TT=\calO_{f,\lambda}=\Z_l$.
Furthermore, we have that the Artin conductor of $\rho^g_{l^r}$ is
$N_f$, therefore we get that $\rho^g_{l^r}$ is a minimal
deformation of $\rho^f_{\lambda}$, hence it corresponds to a
map $\TT \rightarrow \Z/l^r$. However, there exists only one
reduction map from $\Z_l$ to $\Z/l^r$, therefore $\rho^g_{l^r}$ is
isomorphic to $\rho^f_{\lambda^r}$.
\end{proof}
\begin{remark}
    In Proposition \ref{prop LevelLowering}, we are
    assuming that $g$ is of degree $1$ to
    simplify the notation and the proof, and because this is the
    case we care most about in this paper.  However, the proof
    does extend to the general case with
    some care.
\end{remark}

We now give the proof of Theorem \ref{thm EllipticLevelLowering}.
\begin{proof}
Let $E/\Q$, $N_0$, $M$, and $l$ be as required. In particular,
assume that $E[l]$ is strongly irreducible. Since we are assuming
that $l^r | v_p(\Delta)$ for all $p| M$ (and $l \ndiv v_p(\Delta)$
for all $l | N_0$) we get that
the Artin conductor of $\rho^E_{l^r}$
is $N_0$. By Ribet's level lowering we get that there is
a newform class $f$ of level $N_0$ and a prime $\lambda$
such that $\rho^f_{\lambda} \simeq \rho^E_{l}$.
Therefore, we can apply Proposition \ref{prop
LevelLowering} to prove that $\rho_{l^r}^E \simeq \rho_{\lambda^r}^f$.
As is well known, the congruences \ref{eqn congruence 1} and
\ref{eqn congruence 2} in the statement of the theorem
follow by comparing the traces of Frobenius.
\end{proof}

\begin{remark}\label{rem levellowering congruence}
    When $f$ is not unique, all hope is not lost and in favourable
    conditions, we can in fact get some explicit level lowering results.
    As an example, consider an elliptic curve $E/\Q$ of conductor
    $71M$ and minimal discriminant $71 M^{27}$, for some square free
    positive integer $M$ coprime to $71$. Furthermore, assume that
    $E[3]$ is strongly irreducible.
    Such an elliptic curve certainly exists,
    for example when $M=2$ we have the elliptic curve
    $142e1$ in the Cremona database
    \[ E: y^2+xy=x^3-x^2-2626x+52244. \]
    By Ribet's level lowering, we can find a newform class
    $f$ of level $71$ and a prime $\lambda \subset \calO_f$ lying above $3$
    such that $\rho_3^E \simeq \rho_\lambda^f$.
    There are two newform classes $f_1$ and $f_2$, each of
    degree $3$, whose complex embeddings span all of $S_2(\Gamma_0(71))$.
    For $i=1,2$, we can check that $3\calO_{f_i}=\lambda_{i,1} \lambda_{i,2}$,
    where
    $\lambda_{i,1}$ is of inertia degree one, while $\lambda_{i,2}$ is of
    inertia degree $2$.
    The image of $\rho_{\lambda_{i,2}}^{f_i}$ is not contained in
    $\GL_2(\F_3)$, therefore $\rho_{\lambda_{i,2}}^{f_i} \not \simeq \rho_3^E$.
    By computing some Fourier coefficients, we get that
    $\rho_{\lambda_{1,1}}^{f_1} \simeq \rho_{\lambda_{2,1}}^{f_2}$.
    We conclude that $\rho_l^E$ is isomorphic to both of these representations.
    Therefore, all the conditions of our level lowering result are fulfilled,
    except for the uniqueness of $(f,\lambda)$. This prevents us from
    proving a level lowering result modulo $27$. However, by studying the
    deformation ring explicitly,
    we can still prove a level lowering result modulo $9$ in the following
    way.
    For $i=1,2,$ we compute that $\calO_{f_i}$ is generated by
    $a_5(f_i)$, explicitly
    \begin{eqnarray*}
        \calO_{f_1} &=&  \Z[t]/<t^3-5t^2-2t+25>,\\
        \calO_{f_2} &=& \Z[t]/<t^3+3t^2-2t-7>.
    \end{eqnarray*}
    Furthermore, the full Hecke algebra acting on $S_2(\Gamma_0(71))$
    has the representation
    \[ \Z[t]/<(t^3-5t^2-2t+25)(t^3+3t^2-2t-7)>\]
    where $t=T_5$ is the fifth Hecke operator.
    Therefore, the universal deformation ring of $\rho_3^E$,
    which is the localization of the Hecke algebra at $\lambda_{i,1}$,
    is
    \[ \TT= \Z_3[t]/<(t-\alpha_1)(t-\alpha_2)>, \]
    where $\alpha_1 \equiv 20 \pmod {27}$ and $\alpha_2 \equiv 11 \pmod {27}$.
    Notice that $\alpha_1 \equiv \alpha_2 \equiv 2 \pmod 9$, which
    means $\rho_{\lambda_{1,1}^2}^{f_1} \simeq \rho_{\lambda_{2,1}^2}^{f_2}$,
    a result that can also be read off from the Fourier coefficients
    of $f_1$ and $f_2$.
    Since $\rho_{27}^E$ is unramified away from
    $3$ and $71$, and it is flat at $3$, we have that
    $\rho_{27}^E$ is a minimal deformation of $\rho_3^E$,
    hence it corresponds to a unique map $R^\univ \rightarrow \Z/27\Z$.
    Note that this gives us two possible maps
    \begin{eqnarray*}
        \psi_i:&  \TT \rightarrow  & \Z_3 \\
         & t \mapsto  & \alpha_i,
    \end{eqnarray*}
    corresponding to the two modular
    forms with coefficients in $\Z_3$.
    Let $\psi: \TT \rightarrow \Z/27\Z$ correspond to $\rho_{27}^E$.
    Note that $\psi$ is
    uniquely defined by the image of $t$, and there are three possible
    choices for this image: $2$, $11$, or $20$.
    Reducing $\psi$ modulo $9$ we get
    $\overline{\psi}: \TT \rightarrow \Z/9\Z$ is given by
    $\overline{\psi}(t)= 2$. Furthermore, $\overline{\psi}$
    corresponds to $\rho_9^E$,
    so we get the following commutative diagram.
    \[ \xymatrix {
        \TT \ar[r]^{\psi_i} \ar[rd]_{\overline{\psi}} & \Z_3 \ar[d] \\
        & \Z/9\Z
    } \]
    By universality, the map
    \[ \xymatrix{ \TT \ar[r]^{\psi_i} & \Z_3 \ar[r] & \Z/9\Z} \]
    corresponds to the reduction of the $\lambda_i$-adic representation
    of $f_i$ modulo $\lambda_i^2$, that is $\rho_{\lambda_i^2}^{f_i}$.
    Since this is the same as $\overline{\psi}$, which corresponds to
    $\rho_9^E$, we get that
    $ \rho_9^E \simeq \rho_{\lambda_i^2}^{f_i}$ for $i=1$ and $2$.

    In case we take for $E$ the elliptic curve $142e1$,
    we can check explicitly that $\rho_{27}^E \not \simeq \rho_{\lambda_{i,1}^3}^{f_i}$
    for $i=1$ or $2$, since $a_5(E)=2$, while
    $a_5(f_i) \equiv \alpha_i \not \equiv 2 \pmod{\lambda_{i,1}^3}$.
    The congruence modulo $9$ can be verified by computing
    some Fourier coefficients explicitly.
\end{remark}

\section{Irreducibility mod $3$}\label{section irreducibility}

In this section, we obtain a criterion for proving that $\rho_3^E$
is strongly irreducible when $E/\Q$ has a full rational $2$-torsion
structure.
We start with a simple lemma.

\begin{lemma}\label{lem Rubin}
Let $E/\Q$ be an elliptic curve. If $\rho_3^E$ is irreducible, but
not strongly irreducible, then $\rho_3^E(G_{\Q})$ is contained in
the normalizer of a split Cartan subgroup of $\GL_2(\F_3)$.
\end{lemma}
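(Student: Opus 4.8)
The strategy is purely group-theoretic, combined with the description of inertia at $3$. Recall that the maximal subgroups of $\GL_2(\F_3)$ that act irreducibly on $\F_3^2$ fall into a short list: the normalizers of the (nonsplit) Cartan (cyclic of order $8$ extended to order $16$), the normalizers of the split Cartan, and subgroups whose image in $\PGL_2(\F_3)\cong S_4$ is all of $S_4$ or $A_4$. So assume $\rho := \rho_3^E$ is irreducible but not strongly irreducible, set $l^* = -3$ (since $(-1)^{(3-1)/2}\cdot 3 = -3$), and let $H = \rho(G_\Q)$, $H_0 = \rho(G_{\Q(\sqrt{-3})})$, which is normal of index dividing $2$ in $H$. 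By hypothesis $H$ acts irreducibly but $H_0$ does not act absolutely irreducibly.

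First I would observe that $\Q(\sqrt{-3})$ is exactly the quadratic subfield of $\Q(\zeta_3)$ cut out by the mod-$3$ cyclotomic character, i.e.\ $H_0 = \ker(\det : H \to \F_3^\times) \cdot (\text{scalars?})$ — more precisely $G_{\Q(\sqrt{-3})}$ is the kernel of the quadratic character $G_\Q \to \{\pm 1\}$ attached to $\Q(\sqrt{-3})$, and this character equals the composite of $\det\circ\rho = \bar\chi_3$ (the mod $3$ cyclotomic character) with the unique surjection $\F_3^\times \to \{\pm1\}$. Hence $H_0$ is precisely the preimage in $H$ of the subgroup $\mathrm{SL}_2(\F_3)\cap H$ together with the scalar $-I$; in any case $[H:H_0]\le 2$ and $H_0\supseteq H\cap\mathrm{SL}_2(\F_3)$. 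Now run through the cases for $H$: if $H$ surjects onto $S_4$ or $A_4$ in $\PGL_2$, then because $A_4$ is the unique index-$2$ subgroup of $S_4$ and $A_4$ is simple (has no index-$2$ subgroup), the image of $H_0$ in $\PGL_2(\F_3)$ still contains $A_4$, which acts absolutely irreducibly — contradicting non-strong-irreducibility. Likewise if $H$ lies in the normalizer of a nonsplit Cartan $C_{ns}$ but not in $C_{ns}$ itself, then $H\cap C_{ns}$ has index $2$ in $H$ and acts absolutely irreducibly (a nonsplit Cartan is absolutely irreducible since it becomes a nonsplit torus only after a quadratic extension), so $H_0$ would contain this absolutely irreducible subgroup unless $H_0 = H\setminus C_{ns}$-coset — but that coset is not a subgroup, contradiction; and if $H\subseteq C_{ns}$ then $H$ itself is reducible over $\F_9$, hence its restriction to any subgroup, so again we must rule this out by noting $H$ reducible over $\F_9$ forces the $3$-torsion field to contain... — cleanest is: $C_{ns}$ is abelian, so $H$ abelian acting irreducibly over $\F_3$ but its semisimplification over $\F_9$ splits, and one checks $[\,\overline{\Q}\text{-splitting field}:\Q\,]$ argument shows the splitting already happens over $\Q(\sqrt{-3})$ because the quadratic subfield of the Cartan's fixed field is cyclotomic. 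The remaining case, $H$ contained in the normalizer of a \emph{split} Cartan, is the desired conclusion.

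The one subtle point, which I expect to be the main obstacle, is the nonsplit-Cartan case: I need the quadratic extension over which a nonsplit Cartan becomes split (equivalently, over which the nonsplit-Cartan normalizer's irreducible representation becomes reducible) to be $\Q(\sqrt{-3})$, not some other quadratic field. This follows because the determinant of $\rho$ is the mod $3$ cyclotomic character $\bar\chi_3$, which has order $2$ with fixed field $\Q(\sqrt{-3}) = \Q(\sqrt{l^*})$; restricting to $G_{\Q(\sqrt{-3})}$ forces $\det\rho$ to become a square, which is exactly the condition that an element of the normalizer of a nonsplit Cartan lies in the Cartan (the nontrivial coset of the normalizer consists of elements of determinant a non-square times... — more precisely, in $\GL_2(\F_3)$, the normalizer of a nonsplit Cartan modulo the Cartan is generated by an element whose square is a scalar and whose determinant is a non-square, so $H\not\subseteq C_{ns}$ forces $\det\rho$ surjective onto $\F_3^\times$ on all of $G_\Q$ but this fails after restriction to $G_{\Q(\sqrt{-3})}$, hence $H_0\subseteq C_{ns}$ and $\rho|_{G_{\Q(\sqrt{-3})}}$ is \emph{not} absolutely irreducible only if $H_0$ is abelian reducible over $\F_9$). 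Assembling these case-exclusions, the only way $\rho$ can be irreducible over $\F_3$ yet fail to be absolutely irreducible over $G_{\Q(\sqrt{-3})}$ is $\rho(G_\Q) \subseteq N(C_s)$ for a split Cartan $C_s$, which is precisely the assertion of the lemma. I would write the argument by first reducing to the classification of irreducible subgroups of $\GL_2(\F_3)$, then checking in each case that the index-$2$ subgroup $H_0$ remains absolutely irreducible unless $H$ normalizes a split Cartan, taking care in every case to identify the relevant quadratic field with $\Q(\sqrt{-3})$ via the determinant being $\bar\chi_3$.
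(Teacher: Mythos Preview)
The paper itself gives no argument; it simply refers to \cite[Proposition~6]{Rubin97}. Your plan to prove the lemma directly by classifying irreducible subgroups $H\subseteq\GL_2(\F_3)$ and analysing the index-$2$ subgroup $H_0=H\cap\mathrm{SL}_2(\F_3)$ is the right shape, and your identification of $H_0$ with $\rho(G_{\Q(\sqrt{-3})})$ via $\det\rho_3^E=\bar\chi_3$ is correct.

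There is, however, a genuine gap in the nonsplit Cartan case. Two of your assertions there are simply false: a nonsplit Cartan $C_{ns}\cong\F_9^\times$ is abelian, hence \emph{never} absolutely irreducible in dimension $2$; and it is not true that every element of $N(C_{ns})\setminus C_{ns}$ has nonsquare determinant (for $l=3$, half of those eight elements satisfy $\tau^2=-I$ and hence $\det\tau=1$). As a result you never actually exclude the possibility $H=C_{ns}$. This is not a harmless omission: $C_{ns}$ is cyclic of order $8$ while every $N(C_s)$ is dihedral of order $8$, so $C_{ns}$ is \emph{not} contained in the normalizer of any split Cartan; yet $C_{ns}$ is irreducible over $\F_3$ and $C_{ns}\cap\mathrm{SL}_2(\F_3)$ (cyclic of order $4$) is not absolutely irreducible. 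So for an abstract odd-determinant representation the statement would fail, and your case analysis as written does not close.

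The missing ingredient is that $\rho_3^E$ comes from an elliptic curve over $\Q$ and is therefore \emph{odd}: complex conjugation $c$ satisfies $\rho_3^E(c)\sim\mathrm{diag}(1,-1)$, a nonscalar element with both eigenvalues in $\F_3$. Every nonscalar element of $C_{ns}$ has its eigenvalues in $\F_9\setminus\F_3$, so $\rho_3^E(c)\notin C_{ns}$ and hence $H\neq C_{ns}$. Once this case is eliminated, the remaining possibilities with $H_0$ not absolutely irreducible do land inside a normalizer of a split Cartan (either because $H_0$ already fixes an $\F_3$-line and $H$ swaps the two $H_0$-stable lines, or because $H$ is a dihedral group of order $8$ whose image in $\mathrm{PGL}_2(\F_3)\cong S_4$ is a non-normal Klein four-group, i.e.\ exactly some $\overline{N(C_s)}$). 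Rewrite the nonsplit paragraph around the use of complex conjugation rather than the determinant/coset claims.
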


\begin{proof}
A (short) proof can be found in \cite[Proposition 6]{Rubin97}.
\end{proof}

Next, a lemma which restricts the possibility of the image of a
mod-$3$ Galois representation attached to an elliptic curve over
$\Q$ with full rational $2$-torsion.

\begin{lemma}\label{lem fiber product}
Let $E/\Q$ be an elliptic curve with full rational $2$-torsion. Then
$\rho_3^E(G_{\Q})$ is not contained in the normalizer of a split
Cartan subgroup of $\GL_2(\F_3)$.
\end{lemma}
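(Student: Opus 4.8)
The plan is to argue by contradiction, combining the structure of the normalizer of a split Cartan subgroup of $\GL_2(\F_3)$ with the consequences of having full rational $2$-torsion. First I would recall that the normalizer $C$ of a split Cartan subgroup in $\GL_2(\F_3)$ has order $8$; in fact $C \cong D_4$, the dihedral group of order $8$, and its image in $\PGL_2(\F_3) \cong S_4$ is the Klein four-group $V = \{e, (12)(34), (13)(24), (14)(23)\}$. On the other hand, full rational $2$-torsion on $E$ forces $\rho_3^E(G_\Q)$ to contain $-I$ (from the Weil pairing, since $\Q(\mu_3) \not\subset \Q$ and complex conjugation acts by $-I$ on a suitable basis) and, more importantly, it pins down the action on the $3$-division field relative to the $2$-torsion: I would use the fact that for a curve with full $2$-torsion, $E$ admits a model $y^2 = (x-e_1)(x-e_2)(x-e_3)$ with $e_i \in \Q$, and the three nontrivial $2$-torsion points give three independent quadratic subfields, hence $\rho_2^E(G_\Q)$ is trivial while there is still rich ramification at $3$.

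The key step is to pass to $\PGL_2$ and count. If $\rho_3^E(G_\Q) \subseteq C$, then the projective image lies in $V \cong (\Z/2)^2$, so the field cut out by the projective representation is a $(\Z/2)^2$-extension (or smaller) of $\Q$, i.e. a subfield of a biquadratic field $\Q(\sqrt{d_1},\sqrt{d_2})$. I would then identify these quadratic twisting characters explicitly: they must be ramified only at primes of bad reduction of $E$ and at $3$, and — this is the crucial point — the determinant character of $\rho_3^E$ is the mod $3$ cyclotomic character $\chi_3$, which is the quadratic character of $\Q(\sqrt{-3})$. Working out which quadratic characters can occur, one finds that the discriminant of $E$ and the condition $l \nmid v_p(\Delta)$ type constraints, together with the full $2$-torsion hypothesis, make the required biquadratic field impossible; alternatively, and more cleanly, I would show that full rational $2$-torsion forces the mod $3$ representation to have an element of order $3$ in its image (e.g. because the $3$-torsion field is genuinely non-abelian over $\Q$ — a curve with full $2$-torsion and abelian mod $3$ image would be forced to have a rational $3$-isogeny structure incompatible with the $2$-torsion, by examining the relevant modular curve $X(2)$ fiber products with $X_0(3)$ or $X_{\mathrm{split}}(3)$), while $C$ contains no element of order $3$. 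That contradiction finishes the proof.

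Concretely, the cleanest route is via the modular curve interpretation suggested by the lemma's name: the curves $E/\Q$ with full rational $2$-torsion and $\rho_3^E(G_\Q)$ inside the normalizer of a split Cartan form the rational points of a fiber product of $X(2)$ with $X_{\mathrm{split}}(3)$ over the $j$-line. I would compute (or cite) that this fiber product is a curve of genus $\geq 1$, or better, exhibit that its $\Q$-rational points correspond only to curves where the $2$-torsion and the split-Cartan structure are incompatible — for instance because the three quadratic characters attached to the $2$-torsion would have to coincide with the two quadratic characters visible in the normalizer of the split Cartan, which is numerically impossible (three independent characters cannot all lie in a group generated by two). That last observation — that full rational $2$-torsion gives three \emph{independent} quadratic subfields of $\Q(E[2]) \cdot \Q(E[3])$, while the split Cartan normalizer accommodates at most a $(\Z/2)^2$, hence at most two — is what I expect to be the heart of the argument; the main obstacle is book-keeping the interaction between the determinant (cyclotomic) character at $3$ and these quadratic characters so as to rule out the borderline cases cleanly rather than by brute-force case analysis.
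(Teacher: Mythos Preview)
Your proposal contains a genuine error that undermines the main line of argument. You write that for a curve with full rational $2$-torsion, ``the three nontrivial $2$-torsion points give three independent quadratic subfields,'' and later build the punchline on the claim that ``full rational $2$-torsion gives three \emph{independent} quadratic subfields of $\Q(E[2]) \cdot \Q(E[3])$.'' But full rational $2$-torsion means precisely that $\Q(E[2]) = \Q$ and $\rho_2^E(G_\Q)$ is trivial (as you yourself note a few lines earlier), so the $2$-torsion contributes \emph{no} quadratic subfields at all. The counting argument ``three independent characters cannot lie in a group generated by two'' therefore has no content here. Likewise, the assertion that full rational $2$-torsion forces an element of order $3$ in the image of $\rho_3^E$ is unsupported; since the normalizer of a split Cartan in $\GL_2(\F_3)$ is a $2$-group, this claim is essentially equivalent to the lemma itself and cannot be invoked without proof.

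You do gesture at the correct route --- the fiber product of $X(2)$ with $X_{\mathrm{split}}(3)$ over the $j$-line --- but you stop at ``I would compute (or cite) that this fiber product is a curve of genus $\geq 1$.'' That computation is the entire proof, and genus $\geq 1$ alone is not enough: one must determine the rational points. The paper carries this out explicitly: using the parametrizations
\[
j_2(s)=2^8\frac{(s^2+s+1)^3}{(s(s+1))^2}, \qquad
j_{\mathrm{split},3}(t)=12^3\left(\frac{4t+4}{t^2-4}\right)^3,
\]
one finds that the desingularized fiber product is a genus-$1$ curve with six rational cusps, isomorphic over $\Q$ to $y^2 = x^3 - 15x + 22$. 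This elliptic curve has rank $0$ and torsion of order $6$, so its rational points are exactly the cusps, and no elliptic curve $E/\Q$ can simultaneously carry a full level-$2$ structure and a split-Cartan structure at $3$.
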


\begin{proof}
Consider the modular curves $X_{\mathrm{split}}(3),X(2),X(1)$ and
denote by $j_2$ and $j_{\mathrm{split},3}$ the $j$-maps from
$X(2)$ to $X(1)$ and $X_{\mathrm{split}}(3)$ to $X(1)$
respectively. We have explicitly
\[j_2(s)=2^8\frac{(s^2+s+1)^3}{(s(s+1))^2}\]
and
\[j_{\mathrm{split},3}(t)=12^3\left(\frac{4t+4}{t^2-4}\right)^3.\]
This allows us to explicitly compute the fiber product
$X_{\mathrm{split}}(3) \times_{X(1)} X(2)$ by equating
$j_2(s)=j_{\mathrm{split},3}(t)$, and we let $X$ to be the
desingularization of this fiber product. We compute that $X$ has
genus $1$ and $6$ cusps, all contained in $X(\Q)$. We turn $X$
into an elliptic curve over $\Q$ by taking one of the cusps as the
origin. Now $X$ is isomorphic over $\Q$ to the elliptic curve
determined by
\[y^2 = x^3 - 15x + 22.\]
This curve has rank $0$ and torsion group of order $6$. This shows
that $X(\Q)$ is exactly the set of cusps, which proves the
proposition.
\end{proof}

\begin{corollary}\label{cor irreducibility}
Let $E/\Q$ be an elliptic curve with full rational $2$-torsion. If
$\rho_3^E$ is irreducible, then $\rho_3^E$ is strongly
irreducible.
\end{corollary}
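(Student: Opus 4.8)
The plan is to obtain the corollary at once by juxtaposing the two lemmas just proved. Assume that $E/\Q$ has full rational $2$-torsion and that $\rho_3^E$ is irreducible, and suppose, for contradiction, that $\rho_3^E$ is \emph{not} strongly irreducible. First I would invoke Lemma \ref{lem Rubin} for the configuration ``irreducible but not strongly irreducible'': it tells us that the image $\rho_3^E(G_\Q)$ is contained in the normalizer of a split Cartan subgroup of $\GL_2(\F_3)$. On the other hand, Lemma \ref{lem fiber product}, whose hypothesis is exactly that $E$ has full rational $2$-torsion, asserts that $\rho_3^E(G_\Q)$ is \emph{not} contained in the normalizer of a split Cartan subgroup of $\GL_2(\F_3)$. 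These two conclusions are contradictory, so the assumption that $\rho_3^E$ fails to be strongly irreducible is untenable, and therefore $\rho_3^E$ is strongly irreducible.

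I do not expect any genuine obstacle at this step: all of the substance has already been front-loaded into Lemma \ref{lem Rubin} (via Rubin's description of the image of a mod-$3$ representation that is irreducible but not strongly irreducible) and into Lemma \ref{lem fiber product} (via the genus-one computation on the desingularized fiber product $X_{\mathrm{split}}(3)\times_{X(1)}X(2)$, whose rational points turn out to be precisely its six cusps). The only minor bookkeeping is to note that $l^*=(-1)^{(3-1)/2}\cdot 3=-3$ when $l=3$, so that Definition \ref{defn strong irreducibility} and the hypothesis of Lemma \ref{lem Rubin} refer to the same notion. Hence the corollary is simply the logical conjunction of the two lemmas, and nothing further is needed.
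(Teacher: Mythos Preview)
Your proposal is correct and follows exactly the paper's approach: the paper's proof is the single sentence ``This follows immediately by combining Lemma \ref{lem Rubin} and Lemma \ref{lem fiber product},'' and you have simply spelled out that combination as a short contradiction argument. Nothing is missing or different.
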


\begin{proof}
This follows immediately by combining Lemma \ref{lem Rubin} and Lemma
\ref{lem fiber product}.
\end{proof}

We still need to a nice criterion for deciding that $\rho_3^E$ is
irreducible.

\begin{lemma}\label{lem irreducibility}
    Let $\mathcal{E}/\F_p$ be an elliptic curve over $\F_p$
    and let $a$ be the trace of Frobenius of this curve.
    Then $\mathcal{E}$ has a $3$-isogeny if and only if $a \equiv \pm(p+1) \pmod 3$.
\end{lemma}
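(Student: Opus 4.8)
The plan is to convert the existence of a $3$-isogeny into a statement about the Frobenius endomorphism $\phi := \frob_p$ acting on the $3$-torsion, and then reduce to a trivial computation over $\F_3$. I would first dispose of any characteristic issue by taking $p \neq 3$ (the case relevant to the applications), so that multiplication by $3$ on $\mathcal{E}$ is separable and $\mathcal{E}[3](\overline{\F}_p)$ is a $2$-dimensional $\F_3$-vector space on which $G_{\F_p}$ acts through $\phi \in \GL_2(\F_3)$; since $G_{\F_p}$ is topologically generated by $\phi$, a subgroup of $\mathcal{E}[3](\overline{\F}_p)$ is Galois-stable exactly when it is $\phi$-stable.

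Next I would record the standard dictionary between degree-$3$ isogenies and their kernels: a $3$-isogeny $\mathcal{E} \to \mathcal{E}'$ over $\F_p$ has (by separability, as $p \neq 3$) \'etale kernel, hence a $G_{\F_p}$-stable order-$3$ subgroup of $\mathcal{E}[3](\overline{\F}_p)$; conversely, quotienting $\mathcal{E}$ by such a subgroup yields a $3$-isogeny over $\F_p$. Therefore $\mathcal{E}$ has a $3$-isogeny over $\F_p$ if and only if $\phi$ fixes a line in $\mathcal{E}[3](\overline{\F}_p) \cong \F_3^2$, i.e. has an eigenvalue in $\F_3$; and whether $\phi$ is scalar, a single Jordan block, or diagonalisable with distinct eigenvalues, this happens precisely when the characteristic polynomial of $\phi$ on $\mathcal{E}[3]$ has a root in $\F_3$.

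Finally I would identify that characteristic polynomial: it is the reduction modulo $3$ of the characteristic polynomial $T^2 - aT + p$ of Frobenius on the Tate module. So the lemma reduces to deciding when $T^2 - aT + p \in \F_3[T]$ has a root in $\F_3$. Evaluating at $T = 0, 1, 2$ shows that a root exists iff $p \equiv 0$, or $1 - a + p \equiv 0$, or $1 + a + p \equiv 0 \pmod 3$; since $p \neq 3$ the first alternative is vacuous and the other two are exactly $a \equiv p + 1$ and $a \equiv -(p+1) \pmod 3$. (One may instead observe that the monic quadratic has a root over $\F_3$ iff its discriminant $a^2 - 4p \equiv a^2 - p \pmod 3$ is a square, and check on $p \bmod 3$ that this is equivalent to $a \equiv \pm(p+1)$.) The argument is essentially formal; the only delicate point is the reduction-of-characteristic-polynomial step, which is why I would be careful to keep $[3]$ separable, and with the appropriate (\'etale-kernel) interpretation of ``$3$-isogeny'' the statement in fact persists to $p = 3$ as well. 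I anticipate no real obstacle.
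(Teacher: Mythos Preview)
Your argument is correct. The reduction to ``does $T^2 - aT + p$ have a root in $\F_3$'' is clean, and your handling of the $p\neq 3$ hypothesis and the kernel/isogeny dictionary is fine.

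The paper's proof reaches the same destination by a different shortcut: rather than invoking the characteristic polynomial of Frobenius, it observes that a Galois-stable line in $\mathcal{E}[3]$ is the same thing as a rational $3$-torsion point on $\mathcal{E}$ or on its quadratic twist $\mathcal{E}'$ (since $\F_3^* = \{\pm 1\}$, a Frobenius-eigenvalue on a line must be $\pm 1$, and twisting flips the sign of the action). Then $3\mid \#\mathcal{E}(\F_p)=p+1-a$ or $3\mid \#\mathcal{E}'(\F_p)=p+1+a$ gives the congruence immediately. Your approach is the more systematic one---it generalises verbatim to $\ell$-isogenies for any prime $\ell$ by checking whether $T^2 - aT + p$ has a root in $\F_\ell$---while the paper's twist trick is tailored to $\ell=3$ (it exploits $|\F_3^*|=2$) but is slightly slicker in that it bypasses any explicit mention of the characteristic polynomial and lands directly on point counts.
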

\begin{proof}
    Note that $\mathcal{E}$ has a $3$-isogeny, if and only if
    either $\mathcal{E}$ or its quadratic twist $\mathcal{E}'$
    has a $3$-torsion point, i.e.
    $3 | \#\mathcal{E}(\F_p) = p+1-a$ or $3 | \#\mathcal{E}'(\F_p)=p+1+a$.
    This proves the lemma.
\end{proof}

This brings us to the criterion we need for checking strong
irreducibility.

\begin{proposition}\label{prop irreducibility 3}
    Let $E/\Q$ be an elliptic curve with full rational $2$-torsion and $p\equiv
    1 \pmod{3}$ a prime of good reduction for $E$. If $3|a_p(E)$, then
    $\rho_3^E$ is strongly irreducible.
\end{proposition}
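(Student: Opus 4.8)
The plan is to reduce the statement to the two irreducibility results already proved in this section: Lemma~\ref{lem irreducibility}, which detects the existence of a $3$-isogeny over $\F_p$, and Corollary~\ref{cor irreducibility}, which upgrades irreducibility to strong irreducibility for curves with full rational $2$-torsion. So it suffices to show that the hypotheses $p \equiv 1 \pmod 3$, $p$ a prime of good reduction, and $3 \mid a_p(E)$ force $\rho_3^E$ to be irreducible.

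First I would argue by contraposition: suppose $\rho_3^E$ is reducible. Then $E[3]$ has a $G_\Q$-stable line, i.e.\ $E$ admits a rational $3$-isogeny over $\Q$. Reducing modulo $p$ (which is legitimate since $p$ is a prime of good reduction and $p \neq 3$, as $p \equiv 1 \pmod 3$), we obtain that the reduced curve $\mathcal{E} = E \bmod p$ over $\F_p$ also admits a $3$-isogeny. By Lemma~\ref{lem irreducibility} applied to $\mathcal{E}/\F_p$ with trace of Frobenius $a = a_p(E)$, this gives $a_p(E) \equiv \pm(p+1) \pmod 3$. Now use $p \equiv 1 \pmod 3$, so $p + 1 \equiv 2 \equiv -1 \pmod 3$; hence $a_p(E) \equiv \pm 1 \pmod 3$, which contradicts the hypothesis $3 \mid a_p(E)$. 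Therefore $\rho_3^E$ is irreducible.

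Finally, since $E/\Q$ has full rational $2$-torsion and $\rho_3^E$ is irreducible, Corollary~\ref{cor irreducibility} immediately yields that $\rho_3^E$ is strongly irreducible, completing the proof.

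The only point requiring a little care is the reduction step: one must make sure that a rational $3$-isogeny on $E/\Q$ genuinely descends to a $3$-isogeny on the good reduction $\mathcal{E}/\F_p$. This is standard — a $G_\Q$-stable order-$3$ subgroup scheme of $E$ extends to a finite flat subgroup scheme of the Néron model over $\Z_{(p)}$, whose special fibre is an order-$3$ subgroup scheme of $\mathcal{E}$ defining the desired isogeny — but it is the one place where the good-reduction hypothesis is used. Everything else is a direct bookkeeping of congruences modulo $3$, so I do not anticipate any real obstacle.
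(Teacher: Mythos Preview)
Your proof is correct and follows essentially the same approach as the paper: argue by contraposition that reducibility of $\rho_3^E$ gives a rational $3$-isogeny on the reduction at $p$, invoke Lemma~\ref{lem irreducibility} together with $p\equiv 1\pmod 3$ to contradict $3\mid a_p(E)$, and then apply Corollary~\ref{cor irreducibility} to upgrade to strong irreducibility. Your version is slightly more explicit about the reduction step, but the argument is the same.
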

\begin{proof}
    Corollary \ref{cor irreducibility} tells us that irreducibility and
    strong irreducibility in our situation are equivalent. If
    $\rho_3^E$ is reducible, then $E/\Q$ has a rational $3$-isogeny,
    which implies $\overline{E}/\F_p$ has a rational $3$-isogeny for all
    primes of good reduction $p$. By Lemma \ref{lem irreducibility} this
    implies that if $p \equiv 1 \pmod 3$, then we have
    $a_p(E) \neq 0 \pmod 3$, which is the desired result.
\end{proof}

\section{Twisted Fermat equations}\label{section GFE}
%%%%%%%%%%%%%%%%%%%%%%%%%%%%%%%%%%%%%%%%%%%%%%%%%%%%%%%%%%%%%%%%%%%
Let $a,b,c$ be pairwise coprime nonzero integers and $n>1$ an odd
integer (the case $n$ even is trivial, due to our sign choices).
We are interested in solving the Diophantine equation
\begin{equation} \label{eqn twisted fermat}
    ax^n+by^n+cz^n=0.
\end{equation}
For $n>3$, we know that this equation defines a curve $C_n$ of
genus greater than one, so by Faltings' theorem we get that
$C_n(\Q)$ is finite for any such $n$. In fact, in all the cases we
will consider in this paper, we prove that $C_n(\Q)$ is empty for
all $n>3$, except for one trivial solution when $n=7$ in one of
our examples. For $n>3$ a prime, we will use the modular methods
following \cite{Kraus97} and \cite{HalberstadtKraus02}.
For $n=3$ however, $C_3(\Q) \neq \emptyset$ and the
Jacobian of the curve $C_3$, given by the equation
\begin{equation} \label{eqn jacobian}
    Y^2 = X^3 - 2^4 3^3 (abc)^2,
\end{equation}
is elliptic curve of positive rank in all the cases we are considering.
The only remaining case is when $n=9$, and we note that for our
examples $C_9$ has local points everywhere.
We use our level lowering results modulo prime powers to show
that $C_9(\Q)$ is also empty.

From a Diophantine point of view, the main result is the following.

\begin{theorem}\label{thm main diophantine}
Let $(a,b,c)$ be one of $(5^2,2^4,23^4)$, $(5^8,2^4,37)$,
$(5^7,2^4,59^7)$, $(7,2^4,47^7)$, $(11,2^4,5^2 \cdot 17^2)$. Then for
$n \in \Z_{\geq 2}$ with $n\not=3$ the twisted Fermat equation
(\ref{eqn twisted fermat}) has no solutions $x,y,z$ in integers
with $xyz \neq 0$.
\end{theorem}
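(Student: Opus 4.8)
The plan is to reduce the exponent to a prime $p\ge 5$ or to $n=9$, attach a Frey curve to a hypothetical solution, apply level lowering (classically modulo $p$ for $p\ge 5$, and Theorem~\ref{thm EllipticLevelLowering} modulo $9$ for $n=9$), and then derive a contradiction by comparing Hecke eigenvalues. For the reductions: since $a,b,c>0$, for $n$ even the equation has no solution with $xyz\neq 0$, so one may assume $n$ odd, hence $n\ge 5$. If $n$ has a prime divisor $p\ge 5$ then $(x^{n/p},y^{n/p},z^{n/p})$ solves the exponent-$p$ equation; otherwise $n=3^k$ with $k\ge 2$ and $(x^{n/9},y^{n/9},z^{n/9})$ solves the exponent-$9$ equation. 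Thus it suffices to treat $n=p$ for every prime $p\ge 5$ and $n=9$.

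To a (suitably normalized) solution of $ax^m+by^m+cz^m=0$, $m\in\{p,9\}$, attach, following Kraus and Halberstadt--Kraus \cite{Kraus97,HalberstadtKraus02}, the Hellegouarch--Frey curve $E=E_{x,y,z}\colon Y^2=X(X-ax^m)(X+by^m)$; it has full rational $2$-torsion and discriminant $16(abc)^2(xyz)^{2m}$. At every prime $q\mid xyz$ with $q\nmid 6abc$ the curve $E$ is multiplicative with $v_q(\Dmin)=2m\,v_q(xyz)$, so $p\mid v_q(\Dmin)$ when $m=p$ and $9\mid v_q(\Dmin)$ when $m=9$. Running Tate's algorithm one finds that the conductor is $N=N_0M$, with $M$ the product of the primes $q\mid xyz$, $q\nmid 6abc$, and $N_0$ supported only on $2$ and the primes dividing $abc$; the exponents in the tuples (notably $b=2^4$) are chosen precisely so that, for every solution, the resulting $N$ and $\Dmin$ satisfy the hypotheses of the level-lowering results used below (for $n=9$ these include $l^2\nmid N$ and $l\nmid v_q(\Dmin)$ for $q\mid N_0$ with $l=3$), and so that the finitely many possible $N_0$ are small and explicit. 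For $m=p$, Ribet's level lowering \cite{Ribet90,Ribet94} now gives a newform class $f$ of level $N_0$ and a prime $\lambda\mid p$ with $\rho_p^E\simeq\rho_\lambda^f$; for $m=9$, Theorem~\ref{thm EllipticLevelLowering} with $l=3$ and $r=2$ gives a newform class $f$ of level $N_0$ and an unramified prime $\lambda\mid 3$ with $\rho_9^E\simeq\rho_{\lambda^2}^f$ — and in the tuples where the uniqueness hypothesis fails, one falls back on the explicit deformation-ring analysis of Remark~\ref{rem levellowering congruence} to still obtain a congruence modulo $9$.

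Running these steps requires knowing that $\rho_l^E$ is (strongly) irreducible, uniformly over all solutions. For $l=p\ge 5$ this follows from Mazur's theorem on rational isogenies together with the constraints a Frey curve imposes (full $2$-torsion and a perfect $2m$-th power dividing $\Delta$). For $l=3$, since $E$ has full rational $2$-torsion, Corollary~\ref{cor irreducibility} reduces strong irreducibility to irreducibility, and Proposition~\ref{prop irreducibility 3} reduces that, for each tuple, to exhibiting a prime $q\equiv 1\pmod 3$ of good reduction with $3\mid a_q(E)$; reducing the Frey equation modulo $q$ confines $(x:y:z)\bmod q$ to residues for which this divisibility holds. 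Once $\rho_p^E\simeq\rho_\lambda^f$ (resp.\ $\rho_9^E\simeq\rho_{\lambda^2}^f$) is in hand, the endgame is to enumerate, for each admissible $N_0$, all newform classes $f$ and primes $\lambda$, and to pick a small prime $q\nmid 6abc$ for which the congruence $a_q(f)\equiv a_q(E)\pmod{\lambda^r}$ (replaced by $a_q(f)\equiv\pm(1+q)\pmod{\lambda^r}$ when $q\mid xyz$; $r=1$ for $m=p$, $r=2$ for $m=9$) is incompatible with every value $a_q(E)$ takes as $(x:y:z)\bmod q$ ranges over reductions of potential solutions. For large $p$ this is automatic — some admissible $N_0$ carry no newform, and otherwise the congruences force $p$ to divide a fixed nonzero integer, leaving finitely many $p$, handled likewise. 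The one exception is a single tuple with $n=7$, where exactly one residue survives, corresponding to a solution that is exhibited directly and shown to be the only one.

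The genuinely delicate steps, as opposed to routine computation, will be: (i) proving $\rho_l^E$ irreducible uniformly in the solution, especially for $l=3$, where everything hinges on finding the right auxiliary primes $q\equiv 1\pmod 3$; (ii) the conductor bookkeeping, which is hardest at $2$, since $b=2^4$ forces additive reduction there and splits the analysis into several $2$-adic subcases; and (iii) the tuples for which the newform at level $N_0$ is not unique, where level lowering modulo $9$ must be obtained by the explicit deformation-ring computation of Remark~\ref{rem levellowering congruence} instead of directly from Theorem~\ref{thm EllipticLevelLowering}.
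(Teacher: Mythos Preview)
Your plan matches the paper's strategy: reduce to $n=p$ prime ${\geq}\,5$ or $n=9$; attach the Frey curve with full rational $2$-torsion; level-lower (Ribet mod $p$, Theorem~\ref{thm EllipticLevelLowering} mod $9$); then eliminate each newform at level $N_0$ via congruences on $a_q$, invoking Kraus' refinement over $\mathcal{T}_{n,p}$ when needed. Two points, however, are stated incorrectly and one is slightly off.

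First, $b=2^4$ does \emph{not} force additive reduction at $2$; it does the opposite. With the normalizations $16\mid by^n$ and $ax^n\equiv -1\pmod 4$, the substitution $(X,Y)\mapsto(4X,8Y+4X)$ produces a global minimal model whose $c_4$ is a $2$-adic unit, so $E_n$ has good (if $2\nmid y$) or multiplicative (if $2\mid y$) reduction at $2$; the target level $N_0=\prod_{p\mid abc,\ p>2}p$ is odd and there is no $2$-adic case split. This is precisely why the tuples are chosen with $b=2^4$.

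Second, your claim of ``a single tuple with $n=7$ where exactly one residue survives, corresponding to a solution that is exhibited directly'' contradicts the statement you are proving. The ``trivial solution when $n=7$'' mentioned in the paper is the point $(-59:0:5)\in C_7(\Q)$ for $(a,b,c)=(5^7,2^4,59^7)$, which has $y=0$, hence $xyz=0$; it is excluded by hypothesis and there is nothing to ``show to be the only one''.

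Third, for the ramified prime $\lambda_2\mid 3$ at level $935$ the paper does not extract a mod-$9$ congruence in the style of Remark~\ref{rem levellowering congruence}. It instead computes $\TT=(\calO_{f_{11}})_{\lambda_2}\cong\Z_3[T]/\langle T^2-aT+b\rangle$ with $T^2-aT+b\equiv (T-2)^2+3\pmod 9$, observes that this ring admits \emph{no} homomorphism to $\Z/9\Z$, and concludes directly that $\rho_9^{E_9}$ cannot be a minimal deformation of $\rho_{\lambda_2}^{f_{11}}$ at all.
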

\begin{remark}
    Since we are choosing $a$, $b$, and $c$ all positive, proving
    that there are no solutions when $n$ is even is trivial. Of
    course, by different choice of signs, one has to work a little
    bit harder, and we leave those cases to the interested reader.

    In a sense, we have a complete description of the solutions
    for all the exponents $n>1$, since we can find explicit
    generators for the Mordell-Weil group of the
    elliptic curve associated to $C_3$ over $\Q$.
\end{remark}

\subsection{The modular method}
We review how to use elliptic curves, modular forms, and
Galois representations to approach Diophantine equations of the
form (\ref{eqn twisted fermat}), mainly following
\cite{HalberstadtKraus02}. We remark that we deviate from {\em
loc. cit.} by allowing $n$ to be a prime power instead of just a
prime. Fix nonzero coprime integers $a,b,c$. Assume there is a
solution in integers $(x,y,z,n)$ to (\ref{eqn twisted fermat}),
with $xyz \neq 0$ and $n\geq 3$ odd.
Without loss of generality we assume that $by^n$
is even and that $ax^n \equiv -1 \pmod 4$. We also assume
that $ax^n, by^n,$ and $cz^n$ are pairwise coprime.
Consider the Frey elliptic curve
\begin{equation} \label{eqn frey curve}
    Y^2=X(X-ax^n)(X+by^n).
\end{equation}
This model is minimal at all odd primes. Furthermore, if $b \equiv 0 \pmod{16}$,
then we can find a global minimal model over $\Z$
\begin{equation} \label{eqn minimal frey curve}
    E_{n,(x,y)} : Y^2+XY=X^3+{by^n-ax^n-1 \over 4} X^2-{abx^ny^n \over 16} X.
\end{equation}
We often simply write $E_n$, or even $E$, when the indices are
understood from the context. We assume that the condition on $b$
is satisfied, since these are the only cases that we will consider
(for the general situation the reader can refer to
\cite{HalberstadtKraus02}). The minimal discriminant and the
conductor of $E_n$ are given by
\begin{eqnarray*}
    \Delta(E_n) &=& \frac{(abcx^ny^nz^n)^2}{2^8}, \\
    N(E_n) &=& \prod_{\substack{p | abcxyz \\ p \mathrm{\ odd\ prime}}} p.
\end{eqnarray*}
Consider the Galois representation
\begin{equation*}
    \rho_n^{E_n}: G_\Q \rightarrow \GL_2(\Z/n\Z).
\end{equation*}
Assume $n=l^r$ is a prime power with $l$ an odd prime and $r$ a positive
integer, and that $\rho_l^{E_n}$ is strongly irreducible.
By Ribet's level lowering \cite{Ribet90}, \cite{Ribet94}
and the work of Wiles and Taylor-Wiles \cite{Wiles95},
\cite{TaylorWiles95}, we know that $\rho_l^{E_n}$ arises from a
newform class $f$ of level
\[ N_0 = \prod_{\substack{p | abc \\ p>2}} p. \]
This means that there exists a prime ideal $\lambda \subset
\calO_f$ lying above $l$, such that
\begin{equation} \label{eqn isom reps}
    \rho_l^{E_n} \simeq \rho_\lambda^f.
\end{equation}

\begin{remark}
    Note that, since we are assuming that $a, b,$ and $c$ are pairwise
    coprime to each other, if $p^n \ndiv abc$ with $p$ prime,
    then for showing that
    (\ref{eqn twisted fermat}) has no nontrivial solutions, we can
    assume without loss of generality that $ax^n, by^n,$ and $cz^n$
    are pairwise coprime. This is the case for all of our examples
    when $n>7$.
    In general, if for some prime $p$, we have that $p$ divides
    $ax^n, by^n$ and $cz^n$, then it is possible that $E_n$
    has additive reduction at $p$, however for odd primes $p$,
    a quadratic twist of $E_n$ will have a semistable reduction at $p$.
    We will replace $E_n$ by its appropriate quadratic twist (if necessary)
    for the rest of this paper.
    All of our computations in this case will be the same, except
    that we might end up at a level dividing $N_0$, however if we
    use the level $N_0$, the argument still goes through.
    (The situation at the prime $2$ is a bit more subtle, however we
    do not have to deal with it when $n \geq 5$ in our examples.)
    Note that if we use the smaller level, the argument can be
    easier. For instance, in the example
    \[ 5^7x^n+2^4y^n+59^7z^n=0 \]
    when $n=7$, we only need to consider newform classes of level $1$
    (of which there are none). This gives us a considerably easier
    contradiction. However, for sake of uniformity,
    we actually deal with newform classes of level $295$ to show
    that this equation has no nontrivial solutions.
\end{remark}
For every equation in Theorem \ref{thm main
diophantine}, the level $N_0$ we need to consider is given in
Table \ref{table primes}. By comparing traces of Frobenius, we
obtain the congruences \ref{eqn congruence 1} and \ref{eqn
congruence 2} with $r=1$. If $(f,\lambda)$ is the unique pair of
newform class $f$ of level $N_0$ and prime $\lambda \subset
\calO_f$ lying above $l$ that satisfies (\ref{eqn isom reps}),
$\lambda$ is unramified, and for all primes $p | N_0$ we have
$l \ndiv v_p(\Delta(E_n))$,
then we can apply Theorem \ref{thm EllipticLevelLowering} to
$\rho_n^{E_n}$. In this case we get that
$\rho_{n}^{E_n}\simeq \rho_{\lambda^r}^f$ and in particular that the
congruences \ref{eqn congruence 1} and \ref{eqn congruence 2}
hold.

\subsubsection{$l \geq 5$}
Let $l \geq 5$ be a prime number.
Note that by the arguments of \cite[Proposition
6]{Serre87} we have $\rho_l^{E_l}$ is irreducible,
since $E$ is semistable and has full rational $2$-torsion
(by our earlier remarks, this tells us that $\rho_l^{E_l}$ is
strongly irreducible).
In order to prove that there are no solutions to (\ref{eqn twisted
fermat}) for $n=l$, it suffices to find a
contradiction (using congruences) to (\ref{eqn isom reps}) for all
pairs $(f,\lambda)$ of newform classes $f$ of level $N_0$ and
primes $\lambda \subset \calO_f$ lying above $l$.

Let $f$ be a newform class of level $N_0$ such that (\ref{eqn
isom reps}) holds for some $\lambda$. For any prime $p$, define
\[ \mathcal{A}_p= \begin{cases}
\{ a \in \Z : a \equiv p+1 \pmod{4} \mathrm{\ and\ } |a| \leq 2 \sqrt{p} \} \quad \mbox{if $p$ is odd};\\
\{-1,1\} \quad \mbox{if $p=2$}.
\end{cases}\]
We claim that for all primes $p$ where $E$ has good reduction we
have $a_p(E) \in \mathcal{A}_p$. This is because $E$ has full rational
$2$-torsion and for an odd prime $p$ of good reduction, $E[2]$
injects into the reduction of $E$ modulo $p$. If $E$ has good
reduction at $p=2$, then one checks that the reduction of $E$
modulo $p$ still has a rational $2$-torsion point. Together with the Weil
bound, the claim follows. Next, define for all primes $p$ the set
\[\mathcal{T}_p=A_p \cup \{\pm(p+1)\}.\]
The congruences \ref{eqn congruence 1},\ref{eqn congruence 2}
with $r=1$ now give us that for a prime $p \nmid N_0$ we have
\[l| L_{f,p}=p\prod_{a \in \mathcal{T}_p} \norm(a-a_p(f)).\]
(If the degree of $f$ is equal to $1$, then the prime $p$ before
the product is not necessary, but in all our examples this does
not lead to any new information.) It is of course possible that
$L_{f,p}=0$, in which case $l|L_{f,p}$ does not give any
information. However, all our examples are chosen such that,
either $f$ is not rational, or it is rational and the elliptic
curve of conductor $N_0$ associated to it by the Eichler-Shimura
relation is not isogenous to an elliptic curve with full rational
$2$-torsion. In what follows, assume that $f$ satisfies these
conditions. This implies that for infinitely many (in fact, a
positive proportion of) primes $p$ we have $a_p(f) \not\in
\mathcal{A}_p$ (and hence $a_p(f) \not\in \mathcal{T}_p$ since by
the Weil bounds $a_p(f)\not=\pm (p+1)$). It is easy to get an
upper bound in terms of $N_0$ (or $a,b,c$) for the smallest prime
$p \nmid N_0$ for which $a_p(f)\not\in \mathcal{A}_p$. In
practice, for several primes $p_{\mathrm{max}}$, we compute
\[ \gcd \left\{L_{f,p}: \mathrm{\ primes\ } p\leq p_{\mathrm{max}} \mathrm{\ with\ } p \nmid N_0 \right\}\]
and the set of odd primes dividing this quantity, denoted
$\mathcal{L}_{f,p_{\mathrm{max}}}$. We do this until we find a
prime $p_{\mathrm{max}}$ for which
$\mathcal{L}_{f,p_{\mathrm{max}}}$ is not empty and appears to be
the same as $\mathcal{L}_{f,p_{\mathrm{max}}'}$ for any prime
$p_{\mathrm{max}}'\geq p_{\mathrm{max}}$. This yields the
information that for all primes $l>3$ such that $l \not \in
\mathcal{L}_{f,p_{\mathrm{max}}}$ we have a contradiction to
(\ref{eqn isom reps}) for all primes $\lambda \subset \calO_f$
lying above $l$.

For the given level $N_0$, let us finally define for a prime
$p_{\mathrm{max}}$ the set
\[\mathcal{L}_{p_{\mathrm{max}}}=\bigcup_f \mathcal{L}_{f,p_{\mathrm{max}}}\]
where the union is over all newform classes $f$ of level $N_0$. By taking
$p_{\mathrm{max}}$ to be the maximum of the $p_{\mathrm{max}}$'s
for the newform classes $f$, we arrive at a finite set of odd primes
$\mathcal{L}_{p_{\mathrm{max}}}$ which contains $3$ and, in
practice, just a few other odd primes. For every equation in
Theorem \ref{thm main diophantine}, a value of $p_{\mathrm{max}}$
together with $\mathcal{L}_{p_{\mathrm{max}}}-\{3\}$ is given in
Table \ref{table primes}. The significance for the original
Diophantine problem is that for every odd prime $l \not\in
\mathcal{L}_{p_{\mathrm{max}}}$ we have
that (\ref{eqn twisted fermat}) has no integer solutions
with $xyz \neq 0$.

In our examples, for the finitely many primes $l\geq 5$ contained
in $\mathcal{L}_{p_{\mathrm{max}}}$, we show that
$C_l(\Q)=\emptyset$ (except for the one trivial exception)
either by finding a prime $p$ for which
$C_n(\Q_p)=\emptyset$ or, if no such prime $p$ exists, by using
Kraus' method of reduction, see \cite{Kraus98} or \cite[Section
1.2.]{HalberstadtKraus02}, which we briefly describe now.

Fix an exponent $n=l^r$ a prime power (in {\em loc. cit.} $n$
is assumed to be a prime). The possibilities for $a_p(E_n)$ (and
$\trace(\rho_l^{E_n}(\frob_p))$) with $p\equiv 1 \pmod{l}$
sometimes can be shown to be strictly smaller than $\mathcal{A}_p,$ (and
$\mathcal{T}_p$ respectively) by using the additional information
that (\ref{eqn twisted fermat}) has to be satisfied modulo $p$.
Let $p \nmid lN_0$ be a prime. For an element $q \in \Q$ whose
denominator is not divisible by $p$, we denote by $\overline{q}$
the reduction of $q$ modulo $p$ in $\F_p$. If (\ref{eqn twisted
fermat}) has an integer solution $(x,y,z)$ with $p|xyz$ other than
$(0,0,0)$, then necessarily
\begin{equation}\label{eqn 2nd case sols}
\overline{a/b}, \overline{b/c}, \mathrm{\ or\ } \overline{c/a} \in
{\F_p^*}^n.
\end{equation}
In this case we get from $\rho_l^{E_{n,(x,y)}} \simeq \rho_l^f$
that $a_p(f) \equiv \pm (p+1) \pmod{\lambda}$.
If the (hypothetical) integer solution $(x,y,z)$ to (\ref{eqn
twisted fermat}) satisfies $p \nmid xyz$, then
$(\overline{x},\overline{y})$ belongs to
\[S_{n,p}=\{(\alpha,\beta) \in \F_p^* \times \F_p^* : \overline{a}\alpha^n+\overline{b}\beta^n+\overline{c}\gamma^n=0 \ \mbox{for some $\gamma \in \F_p^*$}\}.\]
For any $P=(\alpha,\beta) \in S_{n,p}$, define an elliptic curve
over $\F_p$ by
\[\mathcal{E}_{n,p,P}:y^2=x(x-\overline{a} \alpha^n)(x+\overline{b} \beta^n).\]
Then $a_p(E_{n,(x,y)})$ belongs to
\[\mathcal{A}_{n,p}=\{a_p(\mathcal{E}_{n,p,P}) : P \in S_{n,p}\}.\]
Also consider the set (of possibilities for
$\trace(\rho_l^{E_n}(\frob_p))$)
\[\mathcal{T}_{n,p}=\begin{cases}
\mathcal{A}_{n,p} \cup \{\pm (p+1)\} \quad \mbox{if (\ref{eqn 2nd case sols}) holds};\\
\mathcal{A}_{n,p} \quad \mbox{otherwise}.
\end{cases}\]
Hence, in order to prove that for a (hypothetical) solution
$(x,y,z,n)$ to (\ref{eqn twisted fermat}) and a certain newform class
$f$ of level $N_0$ we cannot have $\rho_l^{E_{n,(x,y)}} \simeq
\rho_{\lambda}^f$ for any prime $\lambda \subset \calO_f$ lying above
$l$, it suffices to find a prime $p \nmid N_0l$ such that
\[l \nmid \prod_{a \in \mathcal{T}_{n,p}} \norm(a-a_p(f)).\]
If for all newform classes $f$ of level $N_0$ we can find such a prime
$p$, then we conclude that (\ref{eqn twisted fermat}) has no integer
solutions with $xyz \neq 0$. In practice, since
we already computed $\mathcal{L}_{f,p_\mathrm{max}}$ for some \lq
large\rq\ prime $p_{\mathrm{max}}$, it only remains to find such a
prime $p$ for the newform classes $f$ of level $N_0$ for which $l \in
\mathcal{L}_{f,p_\mathrm{max}}$.

\begin{remark}\label{rem twist}
From a computational point of view, it is worthwhile to only
consider $\mathcal{E}_{n,p,P}$ up to quadratic twist in order to determine
$\mathcal{A}_{n,p}$ (and hence $\mathcal{T}_{n,p}$). To be
specific, let
\[S_{n,p}'=\{\alpha \in \F_p^* : \overline{a/c}\alpha^n+\overline{b/c} \in {\F_p^*}^n \}.\]
Then we get
\[\mathcal{A}_{n,p}=\{\pm a_p(\mathcal{E}_{n,p,(\alpha,1)}) : \alpha \in S_{k,p}'\}.\]
\end{remark}

For every equation in Theorem \ref{thm main diophantine}, an entry
$(l,p)$ under \lq local $(l,p)$\rq\ of Table \ref{table primes}
indicates that $C_l(\Q_p)=\emptyset$. Furthermore,
for every prime $l \geq 5$ and newform class $f$ of level $N_0$
for which $l \in \mathcal{L}_{f,p_{\mathrm{max}}}$
(which implies $l \in \mathcal{L}_{p_{\mathrm{max}}}$) and
$C_l$ is locally solvable everywhere,
there is an entry $(l,p)$ in
Table \ref{table primes} under \lq Kraus $(l,p)$\rq\
indicating that $l \nmid
\prod_{a \in \mathcal{T}_{l,p}} \norm(a-a_p(f))$. This completes
the data that proves Theorem \ref{thm main diophantine} for primes
$l \geq 5$.

\begin{table}[th!]
\begin{center}
\begin{tabular}{c|c|c|c|c|c}
$(a,b,c)$ & level & $p_{\mathrm{max}}$ &
$\mathcal{L}_{p_{\mathrm{max}}}-\{3\}$ & local $(l,p)$ & Kraus
$(l,p)$
\\ \hline
$(5^2,2^4,23^4)$ & $115$ & $3$ & $\{5\}$ & $(5,11)$ & - \\
$(5^8,2^4,37)$ & $185$ & $3$ & $\{5,19\}$ & $(19,19)$ & $(5,31)$ \\
$(5^7,2^4,59^7)$ & $295$ & $3$ & $\{5,7\}$ & $(5,5)$ & $(7,43)$ \\
$(7,2^4,47^7)$ & $329$ & $23$ & $\{5\}$ & - & $(5,11)$ and $(5,41)$ \\
$(11,2^4,5^2 \cdot 17^2)$ & $935$ & $71$ & $\{5,7\}$ & $(5,5)$ &
$(7,29)$
\end{tabular}
\caption{Data for primes $l\geq 5$.}\label{table primes}
\end{center}
\end{table}

\subsubsection{$n=9$}

To prove that $C_9(\Q)=\emptyset$ for our curves, we first note
that $C_9$ has points everywhere locally; this is a
straightforward computation. We can also quickly find a rational
point on $C_3$ and find that the corresponding elliptic curve has
positive rank over $\Q$; see Table \ref{table 3,9}. Next, we start
applying a mod-$3$ modular approach. Before we can apply level
lowering, we need to show that $\rho_3^{E_9}$ is irreducible
(which implies that it is absolutely irreducible). Later on, we
shall need the stronger result that $\rho_3^{E_9}$ is strongly
irreducible. Using Proposition \ref{prop irreducibility 3} we
obtain an explicit criterion for checking this.

\begin{proposition}\label{prop explicit irreducibility}
Suppose $p$ is a prime such that
\begin{itemize}
\item $p \nmid abc$,
\item $p\equiv 1 \pmod{9}$,
\item condition (\ref{eqn 2nd case sols}) does \emph{not} hold,
\item for all $P \in S_{9,p}$ we have $3|\#\mathcal{E}_{9,p,P}$.
\end{itemize}
Then $\rho_3^{E_9}$ is strongly irreducible.
\end{proposition}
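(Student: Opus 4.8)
The plan is to reduce strong irreducibility of $\rho_3^{E_9}$ to the hypothesis of Proposition \ref{prop irreducibility 3}, namely that there is a prime $p \equiv 1 \pmod 3$ of good reduction for $E_9$ with $3 \mid a_p(E_9)$. Since the stated hypotheses already give a prime $p \equiv 1 \pmod 9$ (hence certainly $p \equiv 1 \pmod 3$) with $p \nmid abc$, the first thing to check is that $p$ is genuinely a prime of \emph{good} reduction for $E_9$. Recalling the conductor formula $N(E_9) = \prod_{p \mid abcxyz,\ p \text{ odd}} p$, the only way $E_9$ could have bad reduction at $p$ is if $p \mid xyz$. But if $p \mid xyz$ for a (hypothetical) solution, then condition (\ref{eqn 2nd case sols}) must hold by the analysis recalled in Section \ref{section GFE}; since we are assuming (\ref{eqn 2nd case sols}) does \emph{not} hold, we get $p \nmid xyz$, so $E_9$ has good reduction at $p$. (If $E_9$ has no integral solution at all there is nothing to prove, so we may assume one exists.)

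Next I would identify $a_p(E_9)$ with the trace of Frobenius of one of the reduced curves $\mathcal{E}_{9,p,P}$. Because $p \nmid xyz$, the reduction $\overline{E}_9/\F_p$ is exactly $\mathcal{E}_{9,p,P}$ for $P = (\overline{x},\overline{y}) \in S_{9,p}$, using the minimal model (\ref{eqn minimal frey curve}) and the fact that reduction commutes with the Weierstrass equation $Y^2 = X(X - \overline{a}\,\overline{x}^9)(X + \overline{b}\,\overline{y}^9)$ up to the harmless change of coordinates between the two models. Hence $a_p(E_9) = a_p(\mathcal{E}_{9,p,P})$, and $\#\mathcal{E}_{9,p,P}(\F_p) = p + 1 - a_p(E_9)$. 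The hypothesis $3 \mid \#\mathcal{E}_{9,p,P}$ for \emph{all} $P \in S_{9,p}$ — in particular for our $P$ — then gives $3 \mid p + 1 - a_p(E_9)$, and since $p \equiv 1 \pmod 3$ this forces $3 \mid a_p(E_9)$.

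With $3 \mid a_p(E_9)$ and $p \equiv 1 \pmod 3$ a prime of good reduction in hand, Proposition \ref{prop irreducibility 3} applies directly (note $E_9$ has full rational $2$-torsion, which is built into the construction of the Frey curve) and yields that $\rho_3^{E_9}$ is strongly irreducible. The only subtlety — really the only place any care is needed — is the bookkeeping in the first two steps: making sure that ``not (\ref{eqn 2nd case sols})'' indeed rules out $p \mid xyz$ and hence gives good reduction, and that the reduced curve is literally one of the $\mathcal{E}_{9,p,P}$ so that the hypothesis $3 \mid \#\mathcal{E}_{9,p,P}$ can be invoked for the relevant $P$. Everything after that is a one-line application of Proposition \ref{prop irreducibility 3}.
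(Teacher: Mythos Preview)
Your overall strategy is exactly what the paper intends: the proposition is stated immediately after the sentence ``Using Proposition \ref{prop irreducibility 3} we obtain an explicit criterion for checking this,'' and no separate proof is written out, so the intended argument is precisely the reduction to Proposition \ref{prop irreducibility 3} that you outline. Your verification that $p$ is a prime of good reduction (via the failure of (\ref{eqn 2nd case sols}) forcing $p\nmid xyz$) and your identification of $\overline{E}_9/\F_p$ with $\mathcal{E}_{9,p,P}$ for $P=(\overline{x},\overline{y})\in S_{9,p}$ are both correct and are exactly the bookkeeping needed.

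There is, however, an arithmetic slip at the crucial step. You write that $3\mid\#\mathcal{E}_{9,p,P}$ gives $3\mid p+1-a_p(E_9)$, ``and since $p\equiv 1\pmod 3$ this forces $3\mid a_p(E_9)$.'' That deduction is false: with $p\equiv 1\pmod 3$ one has $p+1\equiv 2\pmod 3$, so $3\mid p+1-a_p$ yields $a_p\equiv 2\pmod 3$, \emph{not} $a_p\equiv 0\pmod 3$. In fact this exposes a typo in the proposition as printed: what Proposition \ref{prop irreducibility 3} requires is $3\mid a_p(\mathcal{E}_{9,p,P})$, not $3\mid\#\mathcal{E}_{9,p,P}(\F_p)$. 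Note too that the condition $3\mid\#\mathcal{E}$ is not invariant under quadratic twist when $p\equiv 1\pmod 3$ (since $\#\mathcal{E}+\#\mathcal{E}'=2(p+1)\equiv 1\pmod 3$), so the hypothesis as written would not even be compatible with the twist simplification in the paragraph following the proposition; by contrast $3\mid a_p$ is twist-invariant. With the last bullet corrected to ``for all $P\in S_{9,p}$ we have $3\mid a_p(\mathcal{E}_{9,p,P})$,'' your argument goes through verbatim and is the paper's intended proof.
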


Notice that, as in Remark \ref{rem twist}, by considering
quadratic twists, the last condition in the proposition above can
be replaced by: for all $\alpha \in S_{9,p}'$ we have
$3|\#\mathcal{E}_{9,p,(\alpha,1)}$. In all our examples, we find a prime
$p=p_{\mathrm{irr}}$ satisfying all the conditions in the
proposition above, the value is recorded in Table \ref{table 3,9}.

Although we would like to apply Theorem \ref{thm
EllipticLevelLowering} with $r=2$ and $l=3$, for most of the
levels $N_0$ we are considering there actually exist distinct
pairs $(f_1,\lambda_1),(f_2,\lambda_2)$ of newform classes
$f_1,f_2$ of level $N_0$ and prime ideals $\lambda_i \subset
\calO_{f_i}$ ($i=1,2$) lying above $3$ for which
$\rho_{f_1}^{\lambda_1} \simeq \rho_{f_2}^{\lambda_2}$ holds. So
we start with applying \lq ordinary\rq\ level lowering mod-$3$.
For every newform class $f$ of level $N_0$ with $3 \in
\mathcal{L}_{f,p_{\mathrm{max}}}$ we compute for various primes
$p\nmid abc$ with $p\equiv 1 \pmod{3}$ the set $\mathcal{T}_{9,p}$
and check if
\begin{equation} \label{eqn kraus 3 n=9}
    3 \nmid \prod_{a \in \mathcal{T}_{9,p}} \norm(a-a_p(f)).
\end{equation}
Denote by $\mathcal{N}_{p_0}$ the set of newform classes $f$ of
level $N_0$
such that for all primes $p \leq p_0$, (\ref{eqn kraus 3 n=9}) does
not hold.  The prime $p_0$ we used, together with a description of
$\mathcal{N}_{p_0}$ is given in
Table \ref{table 3,9}. Now if $\rho_3^{E_9} \simeq \rho_\lambda^f$
for some newform class $f$ of level $N_0$ and prime ideal $\lambda
\subset \calO_f$ lying above $3$, then necessarily $f \in
\mathcal{N}_{p_0}$. For all the cases we consider, taking into
account that the image of the representation has to be contained
in $\GL_2(\F_3)$, we find that $\lambda$ has to be of inertia
degree $1$. Furthermore, we find a $p_0$ such that
$\mathcal{N}_{p_0}$ is small enough so that the uniqueness
condition in Theorem \ref{thm EllipticLevelLowering} now holds. As
for the ramification condition, in all our cases, except when
$N_0=935$, we find that $3$ is unramified in $K_f$ for all $f \in
\mathcal{N}_{p_0}$. For the case where $N_0=935$, we actually find
that the pair $(f,\lambda)$ we can not deal with modulo $3$ has
$\lambda$ ramified in $K_f$,
and we deal with this case by studying the deformation ring directly.
In all other cases, we apply Theorem \ref{thm EllipticLevelLowering}
(with $r=2$ and $l=3$)
to all newform classes in $\mathcal{N}_{p_0}$.
This time, for all $f \in \mathcal{N}_{p_0}$,
we simply try to find a prime $p\nmid 3N_0$
such that for all $a \in \mathcal{T}_p$ we have
\[9 \nmid \norm(a-a_p(f))\]
and $\Q(a_p(f))=K_f$. In all cases we readily do find such a prime
$p$; for the value, see Table \ref{table 3,9}. This means that we
also obtain a contradiction from $\rho_9^{E_9} \simeq
\rho_{\lambda^2}^f$ for all $f \in \mathcal{N}_{p_0}$ and all
relevant $\lambda$. It follows that $C_9(\Q)=\emptyset$.

\begin{table}[ht!]
\begin{center}
\begin{tabular}{c|c|c|c|c|c}
level & $\Q$-rank of $C_3$ & $p_{\mathrm{irr}}$ & $p_0$ & $\mathcal{N}_{p_0}$ description & $p$ \\
\hline
$115$ & $2$ & $73$ & - & $d=1$ & 2 \\
$185$ & $1$ & $73$ & $73$ & $d=1^*$ & 2 \\
$295$ & $2$ & $37$ & $37$ & $d=6$ & 13 \\
$329$ & $2$ & $109$ & $13$ & $d=5,d=6$ & 5,5 \\
$935$ & $2$ & $37$ & $37$ & $d=11^*$ & -
\end{tabular}
\caption{Data for $n=3$ and $n=9$; $d$ denotes the degree of the
newform class. In case of a $*$, the degree alone does not
determine the class uniquely, in which case we impose the extra
condition $\trace(a_2(f))=0$, which does determine the class
uniquely. }\label{table 3,9}
\end{center}
\end{table}

\begin{remark}\label{remark kraus norm}
In other cases, using $\mathcal{T}_p$ might not give enough
information. Using $\mathcal{T}_{9,p}$ instead might lead to the
desired conclusion. Furthermore, computing $a_p(f) \pmod \lambda$ may yield
more information than $\norm(a-a_p(f)) \pmod l$.
For example, if $(a,b,c)=(7,2^4,47^7)$,
computing $\norm(a-a_p(f)) \pmod 3$ did not give us enough
information to rule out the newform classes of level $329$ with
degrees $5$ and $6$. However, the newform class $f$ of degree $6$
can be ruled out using $a_p(f) \pmod \lambda$.
Specifically, there is a unique prime $\lambda$ above $3$ of
inertia degree $1$ in $K_f$. For any other prime $\lambda'$ above
$3$, the image of the Galois representation $\rho_{\lambda'}^f$ is
not contained in $\GL_2(\F_3)$, hence not isomorphic to
$\rho_3^{E_9}$. To rule out the
prime $\lambda$ in this case, we just note that $f \mod \lambda$ is an
Eisenstein series, which means $\rho_\lambda^f$ is  reducible,
hence not isomorphic to $\rho_3^{E_9}$ since we proved this is
irreducible.

In fact, using these observations, in all our cases we find that
there is exactly one pair of $(f,\lambda)$ for which we are unable
to obtain a contradiction using just level lowering modulo $3$.
In Section \ref{section other methods} we will prove that level
lowering modulo $3$ is not sufficient for this pair.
\end{remark}

\subsubsection{Level $935$}\label{section 935}
We now consider the example
\[ 11x^9+2^4 y^9+5^2 \cdot 17^2 z^9 = 0.\]
Using the data in Table \ref{table 3,9}, we conclude that
if there is a nontrivial solution to this equation,
then we have
$\rho_3^{E_9} \not \simeq \rho^{f_{11}}_{\lambda}$,
where $f_{11}$ is the newform class of level $935$ with degree
$11$ and $\trace(a_2(f_{11}))=0$, and $\lambda \subset \calO_{f_{11}}$
a prime lying above $3$ of inertia degree $1$.
There are exactly two primes $\lambda_1, \lambda_2\subset
\calO_{f_{11}}$ lying above $3$ of inertia degree $1$.
One is unramified, say $\lambda_1$, but the other, say $\lambda_2$
is ramified.
As for the representations
$\rho_{\lambda_i}^{f_{11}}$ for $i=1,2$, we quickly find that they
are not isomorphic.

The case $\rho_3^{E_9} \simeq \rho_{\lambda_1}^{f_{11}}$ actually
leads to a contradiction fairly easily by computing
$\mathcal{T}_{9,31}=\left\{ \pm 8, \pm 32 \right\}$ and
$a_{31}(f_{11}) \equiv 0 \pmod {\lambda_1}$.

The situation for $(f_{11},\lambda_2)$ is a bit more delicate,
specially since we can not apply our level lowering theorem in
this situation. However we can still rule this case out by using
the deformation ring directly. Assume that
$\rho_3^{E_9} \simeq \rho_{\lambda_2}^{f_{11}}$. Then
$\rho_9^{E_9}$ is a minimal deformation of
$\rho_{\lambda_2}^{f_{11}}$. Therefore, this representation should
correspond to a unique map $\R^\univ \rightarrow \Z/9\Z$. However
we can explicitly compute $\R^\univ = \TT$, as we did in remark
\ref{rem levellowering congruence}. Since $(f_{11},\lambda_2)$ is
not congruent to any other newform class of level $935$, we get
that $\TT=\left(\calO_{f_{11}}\right)_{\lambda_2}$. Using SAGE or
MAGMA we get that $\Z[a_3(f_{11})] \subset \calO_{f_{11}}$ with an
index coprime to $3$. Therefore
\[ \calO_{f_{11}} \otimes \Z_3 = \Z_3[T]/<P(T)> \]
where
\begin{small}
\[
P(T) = T^{11} - T^{10} - 25 T^{9} + 26 T^{8} + 222 T^{7} - 225 T^{6} - 827 T^{5} + 705 T^{4} + 1212 T^{3} - 449 T^{2} - 770 T - 168 \]
\end{small}
is the minimal polynomial for $a_3(f_{11})$.
Factoring $P(T)$ over $\Z_3$, we can conclude that
\[ \TT=\left(\calO_{f_{11}}\right)_{\lambda_2} = \Z_3[T]/<T^2-aT+b> \]
where $a \equiv 4 \pmod 9$ and $b \equiv 7 \pmod 9$. Looking at
the above equation modulo $9$, we get $T^2-aT+b \equiv (x-2)^2+3
\pmod 9$, which implies that there are no maps $\TT \rightarrow
\Z/9\Z$. This gives us a contradiction to our assumption that
$\rho_9^{E_9}$ is a minimal deformation of
$\rho_{\lambda_2}^{f_{11}}$, which rules this modular form out as
well, and proves our result.

\section{Necessity of level lowering modulo $9$}\label{section other methods}
One can ask if we really needed level lowering modulo $9$ for solving
(\ref{eqn twisted fermat}).
A priori it could be possible that by using level lowering modulo $3$
we can already obtain the desired contradictions.
However, we will show that for our choice of Frey curve and
triples $(a,b,c)$, level lowering modulo $3$ does not yield
enough information.

\begin{remark}
    We note that there are Frey curves
    attached to the Diophantine equations
    \begin{eqnarray*}
        ax^n+by^n+cz^3&=& 0 \\
        ax^3+by^3+cz^n&=& 0,
    \end{eqnarray*}
    and we can specialize these curves to the case $n=9$.
    The Frey curve attached to the first equation has
    a rational $3$-isogeny by construction, so it is not suitable
    for level lowering.
    As for the Frey curves attached to the second equation,
    along similar lines as in Section \ref{section irreducibility},
    one can show that $E[3]$ is strongly irreducible.
    However, we end up having to deal with modular forms of higher level,
    for example when
    $(a,b,c)=(11,2^4,5^2\cdot 17^2)$, we have to deal with
    modular forms of level (at least) $92565$,
    which is computationally very difficult.

    We also note that other possible non-modular approaches to proving
    $C_9(\Q)=\emptyset$ include descent methods and Mordell-Weil sieving.
    This is a promising approach, almost completely orthogonal
    to the modular method presented here and
    can be an interesting topic for further investigation.
\end{remark}

Let $\overline{\mathcal{T}_{n,p}}\subset \F_3$ be the image of
$\mathcal{T}_{3,p}$ under the reduction map modulo $3$.
In this section we will show that for our examples,
the unique pair of newform class $f$ of level $N_0$ and prime
$\lambda \subset \calO_f$ lying above $3$
mentioned at the end of Remark \ref{remark kraus norm}
satisfies
\[ a_p(f) \mod \lambda \in \overline{\mathcal{T}_{9,p}}\]
for all primes $p \ndiv 3N_0$.
This means that level lowering modulo $9$ (along with the
argument in Section \ref{section 935}) is necessary to prove
Theorem \ref{thm main diophantine}.

We will start by showing that
$\overline{\mathcal{T}_{3,p}} \neq \F_3$ infinitely
often, by showing $0 \not \in \overline{\mathcal{T}_{3,p}}$ for
infinitely many $p \equiv 1 \pmod 3$.
\begin{lemma} \label{lem t3p}
    Let \[ \xymatrix{j : C_n \ar[r] & X(2) \ar[r] & X(1)} \]
    be the $j$-invariant of the Frey elliptic curve corresponding
    to a point on $C_n$, let $p \equiv 1 \pmod n$ be a prime with
    $p \ndiv N_0n$, and let
    $\pi : X_0(3) \rightarrow X(1)$ be the natural forgetful map
    between the modular curves.
    Then
    \begin{enumerate}
        \item $0 \not \in \overline{\mathcal{T}_{n,p}}$ if and only if
            $j(C_n(\F_p)) \subset \pi(X_0(3)(\F_p))$,
            if and only if
            \[\left(C_n \times_{X(1)} X_0(3)\right) (\F_p) \rightarrow C_n(\F_p) \]
            is surjective.
        \item $\pm 1 \in \overline{\mathcal{T}_{n,p}}$ if and only
            if there is a $z \in C_n(\F_p)$ such that
            $j(z) \in \pi(X_0(3)(\F_p))$,
            if and only if $\left(C_n \times_{X(1)} X_0(3)\right)(\F_p)$
            is not empty.
    \end{enumerate}
\end{lemma}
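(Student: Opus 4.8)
The key observation is that, for the Frey curve $E = E_{n,(x,y)}$ attached to a point on $C_n$, the quantity $a_p(E) \bmod 3$ depends only on $j(E) \bmod p$ (through the structure of $E[3]$ over $\F_p$), and more precisely $3 \mid \#E(\F_p)$ (equivalently $a_p(E) \equiv p+1 \equiv \pm 1 \pmod 3$, or $a_p(E) \equiv 2 \pmod 3$ in the complementary case) is governed by whether $\overline{E}/\F_p$ has a rational $3$-isogeny. So I would first recast $\overline{\mathcal{T}_{n,p}}$ intrinsically: by definition $\mathcal{T}_{n,p} = \mathcal{A}_{n,p} \cup \{\pm(p+1)\}$ if (\ref{eqn 2nd case sols}) holds and $\mathcal{A}_{n,p}$ otherwise, with $\mathcal{A}_{n,p} = \{a_p(\mathcal{E}_{n,p,P}) : P \in S_{n,p}\}$. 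Since $p \equiv 1 \pmod n$ implies $p \equiv 1 \pmod 3$, Lemma \ref{lem irreducibility} says that an elliptic curve $\mathcal{E}/\F_p$ satisfies $a_p(\mathcal{E}) \equiv \pm(p+1) \equiv \pm 1 \pmod 3$ exactly when $\mathcal{E}$ has a $3$-isogeny, and since $p \equiv 1 \pmod 3$ gives $p + 1 \equiv 2 \pmod 3$, the only other residue is $0$ when $\mathcal{E}$ has no $3$-isogeny. Hence $0 \in \overline{\mathcal{T}_{n,p}}$ iff some $\mathcal{E}_{n,p,P}$ ($P \in S_{n,p}$) has \emph{no} rational $3$-isogeny over $\F_p$, and $\pm 1 \in \overline{\mathcal{T}_{n,p}}$ iff either (\ref{eqn 2nd case sols}) holds (contributing $\pm(p+1) \equiv \pm 1$) or some $\mathcal{E}_{n,p,P}$ has a rational $3$-isogeny.

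Next I would translate "has a rational $3$-isogeny over $\F_p$" into a statement about the modular curve $X_0(3)$: an elliptic curve $\mathcal{E}/\F_p$ with $j(\mathcal{E}) = j_0 \in \F_p \setminus \{0, 1728\}$ admits an $\F_p$-rational $3$-isogeny iff $j_0 \in \pi(X_0(3)(\F_p))$, where $\pi : X_0(3) \to X(1)$ is the forgetful map — this is just the moduli interpretation, together with the standard fact (as in Lemma \ref{lem irreducibility}) that quadratic twisting does not change whether a $3$-isogeny exists, so the property depends only on $j_0$. (The finitely many $p$ where some Frey curve hits $j = 0$ or $1728$ can be excluded or handled by hand; generically this does not occur.) Now the Frey curve associated to a point $P = (\alpha, \beta) \in S_{n,p}$ — equivalently to a point of $C_n(\F_p)$ reducing appropriately — has $j$-invariant $j(\mathcal{E}_{n,p,P})$, which is precisely the image of that point of $C_n(\F_p)$ under the composite $j : C_n \to X(2) \to X(1)$. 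Therefore: $0 \notin \overline{\mathcal{T}_{n,p}}$ iff \emph{every} point of $C_n(\F_p)$ has its $j$-image in $\pi(X_0(3)(\F_p))$, i.e. $j(C_n(\F_p)) \subset \pi(X_0(3)(\F_p))$; and $\pm 1 \in \overline{\mathcal{T}_{n,p}}$ iff \emph{some} point $z \in C_n(\F_p)$ has $j(z) \in \pi(X_0(3)(\F_p))$. (One must check the condition (\ref{eqn 2nd case sols}) is subsumed: when it holds, a solution with $p \mid xyz$ corresponds to a degenerate/cuspidal point whose $j$-value still lies in the image, so the two descriptions of the $\pm 1$ case agree — this is the one bookkeeping point that needs care, matching the two branches of the definition of $\mathcal{T}_{n,p}$.)

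Finally I would recognise $j(C_n(\F_p)) \subset \pi(X_0(3)(\F_p))$ and its negation as surjectivity/nonemptiness statements for the fibre product. The base change $C_n \times_{X(1)} X_0(3)$ comes with a projection to $C_n$; a point $z \in C_n(\F_p)$ lifts to an $\F_p$-point of the fibre product iff the fibre of $\pi$ over $j(z)$ contains an $\F_p$-point, i.e. iff $j(z) \in \pi(X_0(3)(\F_p))$. Hence "$j(C_n(\F_p)) \subset \pi(X_0(3)(\F_p))$" is exactly "$(C_n \times_{X(1)} X_0(3))(\F_p) \to C_n(\F_p)$ is surjective," giving part (1), and "$\exists z \in C_n(\F_p)$ with $j(z) \in \pi(X_0(3)(\F_p))$" is exactly "$(C_n \times_{X(1)} X_0(3))(\F_p) \neq \emptyset$," giving part (2). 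The main obstacle, and the only place demanding genuine care rather than formal manipulation, is the first paragraph's dictionary between $a_p \bmod 3$ and the existence of a rational $3$-isogeny when the reduced Frey curve degenerates — i.e. correctly matching the two-case definition of $\mathcal{T}_{n,p}$ with the behaviour of the bad/cuspidal fibres of $C_n \to X(2)$ over $\F_p$, and confirming that the finitely many primes with $j \in \{0, 1728\}$ do not interfere with the stated equivalences.
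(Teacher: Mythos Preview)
Your proposal is correct and follows essentially the same approach as the paper: both arguments hinge on Lemma~\ref{lem irreducibility} to translate the residue of $a_p$ modulo $3$ into the (non)existence of a rational $3$-isogeny on the reduced Frey curve, then reinterpret this via the moduli description of $X_0(3)$ and the definition of the fibre product. The paper handles the degenerate (cuspidal) points by noting directly that $\infty \in \pi(X_0(3)(\F_p))$ and that the generalised Frey curve at such a point has trace $\pm(p+1)\not\equiv 0\pmod 3$, which is exactly your bookkeeping for condition~(\ref{eqn 2nd case sols}); your additional caution about $j\in\{0,1728\}$ is not raised in the paper's proof and is in any case harmless here.
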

\begin{proof}
    If $0 \not \in \overline{\mathcal{T}_{n,p}}$, then
    by Lemma \ref{lem irreducibility} we get that for all $z \in j(C_n(\F_p))$
    such that $z \neq \infty$ we have that the corresponding elliptic
    curve $\mathcal{E}_n/\F_p$ has a $3$-isogeny, i.e. $z \in \pi(X_0(3)(\F_p))$.
    We also know that $\infty \in \pi(X_0(3)(\F_p))$, therefore we
    get $j(C_n(\F_p)) \subset \pi(X_0(3)(\F_p))$.
    Now assume that $j(C_n(\F_p)) \subset \pi(X_0(3)(\F_p))$.
    Let $z \in j(C_n(\F_p))$. Notice that if $z=\infty$ then
    the trace of Frobenius of the corresponding generalized elliptic
    curve is $\pm(p+1) \not \equiv 0 \pmod 3$. If
    $z \neq \infty$, then by our assumption $z$ corresponds to
    an elliptic curve $\mathcal{E}_n/\F_p$ with a rational three
    isogeny, which by Lemma \ref{lem irreducibility} will have
    trace of Frobenius equivalent to $\pm 1$ modulo $3$. In either case
    we get $0 \not \in \overline{\mathcal{T}_{n,p}}$.
    This proves the first equivalence.
    By definition of fiber products we see that
    $j(C_n(\F_p)) \subset \pi(X_0(3)(\F_p))$ if and only if
    $\left(C_n \times_{X(1)} X_0(3)\right)(\F_p) \rightarrow C_n(\F_p)$ is surjective,
    which finishes the first part of the lemma.

    The second part of the lemma is proved in a similar fashion.
\end{proof}
Therefore, to find primes $p$ such that $0 \not \in
\overline{\mathcal{T}_{3,p}}$, we are reduced to finding $p$ such
that $\left(C_3 \times_{X(1)} X_0(3)\right)(\F_p) \rightarrow C_3(\F_p)$ is not
surjective. For brevity, we will drop the $X(1)$ from the fiber
product. Let $\widetilde{C_3 \times X_0(3)}$ be the
desingularization of the fiber product. The following lemma
describes the map $\widetilde{C_3 \times X_0(3)} \rightarrow C_3$.
\begin{lemma} \label{lem genus}
    The curve $C_3 \times X_0(3)$ is a genus one curve.
    Furthermore, the natural map
    $\widetilde{C_3 \times X_0(3)} \rightarrow C_3$
    induces the multiplication by $2$ on their Jacobians.
    In particular the Jacobian of $C_3$ is isomorphic to
    the Jacobian of $\widetilde{C_3 \times X_0(3)}$.
\end{lemma}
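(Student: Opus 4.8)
The plan is to work with explicit models throughout, exactly as in the proof of Lemma~\ref{lem fiber product}. Recall that $C_3$ is the smooth plane cubic $ax^3+by^3+cz^3=0$, which we regard as an elliptic curve $(E_0, O)$ after choosing one of its rational points as the origin; its Jacobian is the curve \eqref{eqn jacobian}, $Y^2=X^3-2^4 3^3(abc)^2$. The $j$-map $j\colon C_3\to X(1)$ factors as $C_3\to X(2)\to X(1)$, and $\pi\colon X_0(3)\to X(1)$ is the classical degree-$4$ covering. First I would compute the fiber product $C_3\times_{X(1)}X_0(3)$ explicitly by equating the $j$-invariant of the Frey curve attached to a point of $C_3$ with the $j$-map $j_{0,3}(t)$ of $X_0(3)$ (for which there is a standard rational expression), and then take $\widetilde{C_3\times X_0(3)}$ to be the desingularization. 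A Riemann--Hurwitz computation for the composite $\widetilde{C_3\times X_0(3)}\to C_3\to X(1)$, together with the ramification of $\pi$ over $j=0,1728,\infty$, gives that $\widetilde{C_3\times X_0(3)}$ has genus $1$; since $C_3$ already has a rational point and the covering is defined over $\Q$, the source curve has a rational point and is thus an elliptic curve over $\Q$.

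Next I would identify the induced map on Jacobians. The covering $\widetilde{C_3\times X_0(3)}\to C_3$ has degree $4$ (the degree of $\pi$), so the induced isogeny $\phi\colon \jac(\widetilde{C_3\times X_0(3)})\to \jac(C_3)$ composed with its dual has degree $16$; the goal is to show $\phi$ is the multiplication-by-$2$ map, i.e.\ that $\phi$ is an isogeny of degree $4$ with cyclic-free kernel $E_0[2]$ rather than, say, a degree-$4$ cyclic isogeny. The cleanest way is to observe that the covering $X_0(3)\to X(1)$, when base-changed along $j\colon C_3\to X(1)$, splits up to isogeny in a way dictated by the moduli interpretation: a point of $C_3\times_{X(1)}X_0(3)$ is a point of $C_3$ together with a choice of order-$3$ subgroup of the corresponding Frey curve, and for the Frey curve \eqref{eqn frey curve} attached to $C_3$ the relevant $3$-torsion subgroups are governed by the full $2$-torsion (which is rational) via the $3$-descent/$3$-isogeny structure on the cubic $C_3$ itself. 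Concretely, I expect that pulling back $\pi$ along $j$ produces precisely the curve obtained from $C_3$ by the classical unramified degree-$4$ covering coming from $E_0[2]$, so $\phi$ is multiplication by $2$. The ``in particular'' then follows from the well-known fact that $\jac$ of any elliptic curve is that curve itself, so $\widetilde{C_3\times X_0(3)}$ and $C_3$ have isomorphic Jacobians.

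An alternative, more computational route — and the one I would actually carry out to be safe — is: (i) write down the Weierstrass model of $\widetilde{C_3\times X_0(3)}$ explicitly from the fiber-product equations, (ii) compute its $j$-invariant and minimal discriminant, and (iii) check directly that it has the same $j$-invariant (equivalently, is isomorphic over $\Q$ after accounting for twist) as $C_3$, together with the fact that the covering map has degree $4$; since the only degree-$4$ isogeny from an elliptic curve to an isomorphic curve whose dual is again such a map is $[2]$ (a genuinely cyclic degree-$4$ isogeny would land on a curve of different $j$-invariant unless the curve has CM by an order containing an element of norm $4$, which \eqref{eqn jacobian} does not, as $\Z[\sqrt{-3}]$-type CM here is by $\Z[\zeta_3]$ and $2$ is inert), we conclude $\phi=[2]$.

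The main obstacle I anticipate is step (ii)/the identification of the isogeny: verifying that the covering is $[2]$ rather than merely \emph{some} degree-$4$ isogeny requires either a careful moduli-theoretic argument about which $3$-isogenies the Frey curve of $C_3$ admits, or an honest algebraic computation of the covering map on the explicit Weierstrass models; the genus computation and the existence of a rational point are routine by comparison. I would lean on the CM structure of \eqref{eqn jacobian} (CM by $\Z[\zeta_3]$, in which $2$ is inert, so $E_0$ has no rational cyclic $4$-isogeny to an isomorphic curve) to pin down that the degree-$4$ covering must be $[2]$, which also immediately gives the final isomorphism of Jacobians.
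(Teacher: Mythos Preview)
Your proposal is broadly correct but misses the paper's main simplification, and contains one unjustified claim.

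For the genus computation, the paper argues more directly: at every point of $X(1)$ the ramification indices of $X_0(3)\to X(1)$ divide those of $C_3\to X(1)$, so the cover $\widetilde{C_3\times X_0(3)}\to C_3$ is \emph{unramified} of degree $4$, and Riemann--Hurwitz gives genus $1$ immediately. Your composite Riemann--Hurwitz computation would reach the same conclusion but with more bookkeeping.

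For identifying the isogeny, the paper's key move---which you do not make---is to observe that the statement ``the induced map on Jacobians is $[2]$'' is geometric, hence invariant under twisting $C_3$. So it suffices to treat the single curve $C\colon x^3+y^3+z^3=0$, whose Jacobian is $Y^2=X^3-2^4\cdot 3^3$. This curve has no rational $2$-isogeny (the cubic $X^3-432$ has no rational root), and therefore the only $\Q$-rational degree-$4$ isogeny out of it is $[2]$: a cyclic kernel of order $4$ would contain a Galois-stable subgroup of order $2$, and a kernel isomorphic to $(\Z/2)^2$ with a Galois-stable order-$2$ subgroup would likewise produce a $2$-isogeny, leaving only $E[2]$ itself. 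This is a two-line argument once the twist reduction is made. Your CM argument (that $2$ is inert in $\Z[\zeta_3]$, so there is no cyclic degree-$4$ endomorphism) is correct in spirit and would work, but it requires you first to verify that source and target share the same $j$-invariant, and you propose doing this by explicit computation with general $(a,b,c)$---considerably more work than the paper's reduction to one twist.

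Finally, your assertion that ``since $C_3$ already has a rational point and the covering is defined over $\Q$, the source curve has a rational point'' is not justified: an unramified degree-$4$ cover of an elliptic curve over $\Q$ need not have a $\Q$-point. The paper does not claim this, and it is not needed---the statement concerns the induced map on Jacobians, which makes sense for genus-$1$ curves without rational points.
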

\begin{proof}
    A quick calculation shows that for every point $P$ of $X(1)$,
    the ramification indices of
    $X_0(3) \rightarrow X(1)$ above $P$,
    all divide the ramification indices of
    $C_3 \rightarrow X(1)$ above $P$, which implies
    \[ \xymatrix{ \widetilde{C_3 \times X_0(3)} \ar[r] & C_3} \]
    is unramified. Therefore, by Riemann-Hurwitz's theorem
    we get that $\widetilde{C_3\times X_0(3)} \rightarrow X(1)$
    is a genus one curve.

    To show that this map is the multiplication by the $2$ map
    on the Jacobians, note that this a geometric statement,
    and it suffices to prove it for a particular twist of $C_3$.
    Specifically, we show that
    $\widetilde{C \times X_0(3)} \rightarrow C$ over $\Q$
    induces the multiplication by $2$ map on the Jacobians,
    where $C: x^3+y^3+z^3=0$.
    The induced map on the Jacobians
    \[ \xymatrix{ \jac(C) \ar[r] & \jac(\widetilde{C \times X_0(3)})  } \]
    is an isogeny of degree $4$, and is defined over $\Q$.
    Notice that $\jac(C)$ is the elliptic curve given by
    $Y^2=X^3-2^4\cdot 3^3.$
    This elliptic curve has no rational $2$ isogeny, therefore
    the only possibility is for the above map to be multiplication by
    $2$, which is the desired result.
\end{proof}

The following proposition tells us that for most problems, $0 \not
\in \overline{\mathcal{T}_{3,p}}$ for infinitely many primes $p$.
\begin{proposition} \label{prop c3 heuristics}
    Let $J$ be the Jacobian of $C_3/\Q$ and let $p \equiv 1 \pmod 3$ be
    a prime. Assume that $J$ has good reduction at $p$.
    Then $2 | a_p(J)$ if and only if
    $0 \in \overline{\mathcal{T}_{3,p}}$.
    Specifically if $4abc$ is not a perfect cube, then
    $0 \not \in \overline{\mathcal{T}_{3,p}}$ infinitely often.
\end{proposition}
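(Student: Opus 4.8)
The plan is to translate the statement into a comparison of point counts on the two genus-one curves $C_3$ and $\widetilde{C_3 \times X_0(3)}$, and then invoke the isogeny from Lemma~\ref{lem genus}. First I would observe that by Lemma~\ref{lem t3p}(1), $0 \in \overline{\mathcal{T}_{3,p}}$ if and only if the map $\left(C_3 \times_{X(1)} X_0(3)\right)(\F_p) \to C_3(\F_p)$ fails to be surjective; since over $\F_p$ the fiber product and its desingularization $\widetilde{C_3 \times X_0(3)}$ differ only over the (rational) cusps and singular points, this surjectivity is equivalent to surjectivity of $\widetilde{C_3 \times X_0(3)}(\F_p) \to C_3(\F_p)$. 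By Lemma~\ref{lem genus} this is a degree-$2$ isogeny of elliptic curves over $\Q$ (after choosing a base point), so reduction mod $p$ (good, by hypothesis on $J$, noting $J = \jac(C_3) \cong \jac(\widetilde{C_3\times X_0(3)})$) gives a degree-$2$ isogeny $\phi: \widetilde{C_3\times X_0(3)}_{\F_p} \to (C_3)_{\F_p}$ of elliptic curves.

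Next I would argue the group-theoretic core: a degree-$2$ isogeny $\phi$ of elliptic curves over $\F_p$ is surjective on $\F_p$-points if and only if $\#\widetilde{C_3\times X_0(3)}(\F_p) = \#C_3(\F_p)$, i.e.\ if and only if $a_p$ of the two curves agree. Since the two curves are $2$-isogenous, $\#\widetilde{C_3\times X_0(3)}(\F_p) = p+1-a_p(J)$ as well (isogenous curves have the same number of points), so the point counts always agree and $\phi$ is \emph{always} surjective?! Here I must be more careful: isogenous curves over $\F_p$ do have equal point counts, but surjectivity of $\phi$ on $\F_p$-rational points is a genuinely finer condition governed by the kernel. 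Concretely, $\phi$ is surjective on $\F_p$-points iff $\ker\phi$ meets the image of $\widetilde{C_3\times X_0(3)}(\F_p)$ appropriately; the clean criterion is that $\phi$ is surjective on $\F_p$-points iff the dual isogeny $\hat\phi$ is injective on $\F_p$-points, iff $\ker\hat\phi \subset (C_3)(\F_p)$ fails, i.e.\ iff the $2$-torsion point generating $\ker\hat\phi$ is \emph{not} $\F_p$-rational. Tracking $2$-torsion, $C_3 : ax^3+by^3+cz^3=0$ has Jacobian $Y^2 = X^3 - 2^4 3^3 (abc)^2$, whose $2$-division polynomial is $X^3 - 2^4 3^3(abc)^2$; this has a root in $\F_p$ iff $2^4 3^3 (abc)^2 = 4abc \cdot (12abc)^2$ is a cube in $\F_p$, iff $4abc$ is a cube in $\F_p$ (as $p\equiv 1 \pmod 3$). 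So $0 \in \overline{\mathcal{T}_{3,p}}$ iff this $2$-torsion is rational, which matches the Jacobian having a rational point of order $2$ beyond the full structure, i.e.\ matches the parity statement $2 \mid a_p(J)$ once one notes $\#J(\F_p) = p+1-a_p(J)$ and $p \equiv 1 \pmod 3$ forces $3 \nmid \#J(\F_p)$ is irrelevant — the relevant fact is $2 \mid \#J(\F_p) \iff J$ has a rational $2$-torsion point, and $\#J(\F_p) = p+1-a_p(J)$ with $p$ odd gives $2\mid \#J(\F_p) \iff 2 \mid a_p(J)$. Assembling: $0 \in \overline{\mathcal{T}_{3,p}} \iff 2 \mid a_p(J)$, the first claim.

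For the final sentence, suppose $4abc$ is not a perfect cube in $\Z$. Then the cubic $X^3 - 4abc$ (equivalently $X^3 - 2^4 3^3(abc)^2$ up to the substitution above) is irreducible over $\Q$, or at least has no rational root, so its splitting field $L = \Q(\sqrt[3]{4abc}, \zeta_3)$ is a nontrivial extension of $\Q$ in which the relevant $2$-torsion of $J$ becomes rational only at primes $p$ split appropriately. By the Chebotarev density theorem, the set of primes $p \equiv 1 \pmod 3$ for which $X^3 - 4abc$ has \emph{no} root mod $p$ (equivalently $4abc \notin (\F_p^*)^3$, equivalently $J$ has no rational $2$-torsion point, equivalently $0 \notin \overline{\mathcal{T}_{3,p}}$) has positive density — indeed density $\tfrac{1}{3}$ among $p \equiv 1 \pmod 3$, coming from the Frobenius conjugacy class in $\Gal(L/\Q)$ that fixes $\zeta_3$ but acts as a nontrivial cyclic permutation on the cube roots. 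In particular this set is infinite. The main obstacle I anticipate is the careful bookkeeping in the middle paragraph: correctly identifying which $2$-torsion point generates the kernel of the relevant direction of the isogeny and hence whether rationality of \emph{that} point corresponds to surjectivity or its failure — the parity of $a_p(J)$ only detects whether \emph{some} nontrivial $2$-torsion is $\F_p$-rational, so one needs that for the $2$-isogeny $\widetilde{C_3\times X_0(3)} \to C_3$ the kernel of the \emph{dual} is exactly the subgroup generated by the unique $2$-torsion visible on $\jac(C_3)$ (which, since $\jac(C_3): Y^2 = X^3 - 2^4 3^3 (abc)^2$ has only the one $\F_p$-rational-when-$4abc$-is-a-cube point of order $2$, pins everything down), and that this $2$-torsion structure is unaffected by passing to the twist $C$ used in Lemma~\ref{lem genus}.
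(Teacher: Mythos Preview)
The central error is a misreading of Lemma~\ref{lem genus}. The covering $\widetilde{C_3 \times X_0(3)} \to C_3$ has degree $4$ (since $X_0(3) \to X(1)$ has degree $4$), and the lemma says it induces \emph{multiplication by $2$} on the Jacobians---a degree-$4$ isogeny---not a degree-$2$ isogeny. Your whole second paragraph treats it as a $2$-isogeny and tries to decide surjectivity via the kernel of the dual. But any $2$-isogeny $\phi:E_1\to E_2$ defined over $\F_p$ with $p$ odd has its kernel (an \'etale group scheme of order $2$) automatically $\F_p$-rational, so $|\phi(E_1(\F_p))|=|E_1(\F_p)|/2<|E_2(\F_p)|$ and $\phi$ is \emph{never} surjective on $\F_p$-points. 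Your framework would then force $0\in\overline{\mathcal{T}_{3,p}}$ for every such $p$, which is false. The attempted rescue via ``the unique $2$-torsion point'' of $\jac(C_3)$ also collapses: for $p\equiv 1\pmod 3$ the curve $Y^2=X^3-D$ has either no nontrivial $\F_p$-rational $2$-torsion (when $D$ is a non-cube) or the full $(\Z/2\Z)^2$ (when $D$ is a cube), never a single nontrivial point, so there is no distinguished order-$2$ kernel to track.

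The paper's argument sidesteps all of this. Using the Weil bound one picks an $\F_p$-point on $\widetilde{C_3\times X_0(3)}$ and its image on $C_3$, and uses these as origins to identify both curves with $J$ over $\F_p$. Under this identification the map is literally $[2]:J(\F_p)\to J(\F_p)$, and multiplication by $2$ on a finite abelian group is surjective if and only if the group has odd order, i.e.\ if and only if $p+1-a_p(J)$ is odd, i.e.\ $a_p(J)$ is odd. That is the whole biconditional. Your final paragraph on the ``infinitely often'' clause (Chebotarev for $\Q(\sqrt[3]{4abc},\zeta_3)$, equivalently the absence of a rational $2$-torsion point on $J$) is correct and matches the paper; the only repair needed is to replace the $2$-isogeny analysis by the one-line $[2]$-on-$J(\F_p)$ argument.
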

\begin{proof}
    By Lemma \ref{lem t3p}, we need to show that
    $2 | a_p(J)$ if and only if
    $\left(C_3 \times X_0(3)\right)(\F_p) \rightarrow C_3(\F_p)$
    is not surjective.
    Notice that we can replace $C_3 \times X_0(3)$ by its desingularization
    without loss of generality.

    By Lemma \ref{lem genus}, we know that $\widetilde{C_3 \times X_0(3)}$
    is a genus one curve.
    Using the Weil bound we
    get that genus one curves always have an $\F_p$
    point, and hence they are isomorphic to their Jacobians. Let
    $P \in \left(\widetilde{C_3 \times X_0(3)}\right) (\F_p)$, and let
    $Q$ be the image of
    this point in $C_3(\F_p)$.  Using $P$ and $Q$ as the origins, we get an
    explicit isomorphism between
    $C_3 \simeq J \simeq \widetilde{C_3 \times X_0(3)}$.
    Using this identification, the map
    $\widetilde{C_3 \times X_0(3)} \rightarrow C_3$ is the
    multiplication by $2$ map.

    Now if we assume that $a_p(J)$ is odd, then $J(\F_p)$ is an Abelian
    group with an odd order, therefore the multiplication by $2$ map
    is an isomorphism. In particular
    \[ \left(\widetilde{C_3 \times X_0(3)}\right)(\F_p) \rightarrow C_3(\F_p) \]
    is surjective.
    Similarly if $a_p(J)$ is even, then $J(\F_p)$ is an Abelian group with
    even order, and therefore multiplication by $2$ map is not surjective
    on the $\F_p$ points.

    Note that, when $4abc$ is not a perfect cube, the Jacobian $J$,
    given by (\ref{eqn jacobian}), has no
    nontrivial rational $2$-torsion point. In this situation, $a_p(J)$ is odd
    infinitely often, and we get that for infinitely many $p$'s, $0
    \not \in \overline{\mathcal{T}_{3,p}}$.
    This completes the proof of the proposition.
\end{proof}

In all our cases, it is easy to find an integer solution $(x,y,z)$
to (\ref{eqn twisted fermat}) with $n=3$ such that
$\rho_3^{E_{3,(x,y)}} \simeq \rho_\lambda^f$.
This shows that $a_p(f) \mod \lambda \in \overline{\mathcal{T}_{3,p}}$
for all primes $p \ndiv 3N_0$.
However, since
\[\overline{\mathcal{T}_{9,p}} \subset \overline{\mathcal{T}_{3,p}},\]
a priori it is possible that Kraus' argument can succeed for some
prime $p$ beyond our search space. This is in fact not the case,
and we show that for $p$ large enough, this containment is in fact
equality, and hence we can not get a contradiction (it is
easy to see that for $p \equiv 2 \pmod 3$ we have
$\overline{\mathcal{T}_{9,p}} = \overline{\mathcal{T}_{3,p}} =\F_3$).
\begin{proposition} \label{prop c9 heuristics}
    \begin{enumerate}
        \item If $p>216^2$ is a prime congruent to $1 \pmod 3$, then
            $\pm 1 \in \overline{\mathcal{T}_{3,p}}$ if and only if
            $\pm 1 \in \overline{\mathcal{T}_{9,p}}$.
        \item If $p>106^2$ is a prime congruent to $1 \pmod 3$,
            then $0 \in \overline{\mathcal{T}_{3,p}}$
            if and only if $0 \in \overline{\mathcal{T}_{9,p}}$.
    \end{enumerate}
\end{proposition}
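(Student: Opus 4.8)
The plan is to reduce both claims, via Lemma \ref{lem t3p}, to counting $\F_p$-points on explicit covers of $C_9$, and then to apply the Weil bounds. First, if $p\equiv 1\pmod 3$ but $p\not\equiv 1\pmod 9$ then $\gcd(9,p-1)=3$, so ${\F_p^*}^9={\F_p^*}^3$; hence $S_{9,p}$, $\mathcal{A}_{9,p}$ and condition (\ref{eqn 2nd case sols}) for $n=9$ all agree with their $n=3$ analogues, so $\overline{\mathcal{T}_{9,p}}=\overline{\mathcal{T}_{3,p}}$ and there is nothing to prove. So I may assume $p\equiv 1\pmod 9$, $p\nmid 3N_0$, and $p$ of good reduction for the curves below (the finitely many excluded primes being $<106^2$). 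Since the $j$-invariant of the Frey curve attached to a point of $C_9$ depends only on $-by^9/(ax^9)$, the map $j\colon C_9\to X(2)\to X(1)$ factors through the degree-$9$ cover $\phi\colon C_9\to C_3$, $(x,y,z)\mapsto(x^3,y^3,z^3)$, so $C_9\times_{X(1)}X_0(3)=C_9\times_{C_3}\bigl(C_3\times_{X(1)}X_0(3)\bigr)$. By Lemma \ref{lem genus}, $D_3:=\widetilde{C_3\times X_0(3)}$ has genus $1$ with Jacobian $J:=\jac(C_3)$, and for a suitable choice of base points the map $D_3\to C_3$ is the isogeny $[2]\colon J\to J$; in particular it is finite étale of degree $4$, its geometric fibres are cosets of $J[2]$, and a fibre lying over $R\in C_3(\F_p)$ contains an $\F_p$-point iff $R\in 2J(\F_p)$. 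As $9$ is coprime to $4$, the curve $D_9:=C_9\times_{C_3}D_3=\widetilde{C_9\times_{X(1)}X_0(3)}$ is geometrically irreducible and $D_9\to C_9$ is finite étale of degree $4$; since $C_9$ is a smooth plane curve of degree $9$ (genus $28$), Riemann--Hurwitz gives $g(D_9)=1+4\cdot 27=109$.

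For part $(1)$, Lemma \ref{lem t3p}$(2)$ says $\pm1\in\overline{\mathcal{T}_{n,p}}$ iff $\bigl(C_n\times_{X(1)}X_0(3)\bigr)(\F_p)\neq\emptyset$. The projection $D_9\to D_3$ gives $D_9(\F_p)\neq\emptyset\Rightarrow D_3(\F_p)\neq\emptyset$, which is the ``only if'' direction. Conversely $D_3$ has genus $1$, so $D_3(\F_p)\neq\emptyset$ by Hasse--Weil; thus $\pm1\in\overline{\mathcal{T}_{3,p}}$ holds for every admissible $p$, and all that remains is to show $D_9(\F_p)\neq\emptyset$. I would deduce this from the Weil bound applied to the geometrically irreducible genus-$109$ curve $D_9$, sharpened where necessary by counting the cuspidal points of $D_9$ lying over the two cusps of $X_0(3)$ and by Serre's refinement of the Weil estimate; this is what yields the explicit threshold $p>216^2$.

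For part $(2)$, the implication $0\in\overline{\mathcal{T}_{9,p}}\Rightarrow 0\in\overline{\mathcal{T}_{3,p}}$ is just the inclusion $\overline{\mathcal{T}_{9,p}}\subseteq\overline{\mathcal{T}_{3,p}}$ (substituting $(\alpha^3,\beta^3,\gamma^3)$ gives $\mathcal{A}_{9,p}\subseteq\mathcal{A}_{3,p}$, and ${\F_p^*}^9\subseteq{\F_p^*}^3$). For the converse, assume $0\in\overline{\mathcal{T}_{3,p}}$; by Proposition \ref{prop c3 heuristics} the integer $a_p(J)$ is even, and since $J\colon Y^2=X^3-2^43^3(abc)^2$ and $p\equiv 1\pmod 3$ this forces $J[2](\F_p)\cong(\Z/2\Z)^2$, so the three $2$-isogenies $\psi_i\colon J\to J_i$ $(i=1,2,3)$ are defined over $\F_p$. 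By Lemma \ref{lem t3p}$(1)$, $0\notin\overline{\mathcal{T}_{9,p}}$ means $D_9(\F_p)\to C_9(\F_p)$ is surjective, i.e. $\phi\bigl(C_9(\F_p)\bigr)\subseteq 2J(\F_p)=\bigcap_i\widehat{\psi}_i\bigl(J_i(\F_p)\bigr)$. Introduce the étale double cover $D_9^{(i)}:=C_9\times_{C_3,\widehat{\psi}_i}J_i$ of $C_9$: it has genus $2\cdot 28-1=55$, Prym $P_i$ of dimension $27$, and $\jac(D_9^{(i)})$ is isogenous to $\jac(C_9)\times P_i$. Now $\phi(C_9(\F_p))\subseteq\widehat{\psi}_i(J_i(\F_p))$ forces $D_9^{(i)}(\F_p)\to C_9(\F_p)$ to be surjective; comparing $\#D_9^{(i)}(\F_p)=2\,\#\{Q\in C_9(\F_p):\phi(Q)\in\widehat{\psi}_i(J_i(\F_p))\}$ (each fibre over such a $Q$ being a torsor under the rational order-$2$ group $\ker\widehat{\psi}_i$) with $\#D_9^{(i)}(\F_p)=p+1-a_p(C_9)-a_p(P_i)$ and with $\#C_9(\F_p)=p+1-a_p(C_9)$, surjectivity would give $a_p(C_9)-a_p(P_i)=p+1$. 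But $|a_p(C_9)-a_p(P_i)|\le 2\bigl(g(C_9)+\dim P_i\bigr)\sqrt p<p+1$ once $p>106^2$, a contradiction; hence $\phi(C_9(\F_p))\not\subseteq 2J(\F_p)$, i.e. $0\in\overline{\mathcal{T}_{9,p}}$.

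The part I expect to be the main obstacle is not this structure but the extraction of the exact constants $216^2$ and $106^2$ rather than merely ``$p$ sufficiently large''. That requires the precise genus and Prym-dimension computations, a careful count of $\#D_9(\F_p)$ and of $\#D_9^{(i)}(\F_p)$ against $\#C_9(\F_p)$ using the Weil--Serre bounds together with the known rational (cuspidal) points on these curves, and a direct verification for the handful of primes not covered by the crude estimate.
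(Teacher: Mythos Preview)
Your part (1) matches the paper's proof exactly: compute $g(D_9)=109$ via Riemann--Hurwitz for the \'etale degree-$4$ cover $D_9\to C_9$, then apply the Weil bound. For part (2) you take a genuinely different route. The paper argues directly on $D_9$: if $D_9(\F_p)\to C_9(\F_p)$ is surjective while $D_3(\F_p)\to C_3(\F_p)$ is not, then every fibre of $D_9\to C_9$ over an $\F_p$-point, being a $J[2]$-coset containing a rational point, contributes $2$ or $4$ rational points, whence $|D_9(\F_p)|\ge 2|C_9(\F_p)|$, and the Weil bounds on the two curves rule this out for $p$ large. You instead interpose an intermediate \'etale double cover $D_9^{(i)}$ of genus $55$ and use the Prym decomposition $\jac(D_9^{(i)})\sim\jac(C_9)\times P_i$ with $\dim P_i=27$ to get the exact identity $\#D_9^{(i)}(\F_p)=2\,\#C_9(\F_p)$. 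This costs more setup (notably the observation that $J[2]$ is fully $\F_p$-rational once $a_p(J)$ is even, which holds because $X^3-2^43^3(abc)^2$ either splits completely or is irreducible over $\F_p$ when $p\equiv 1\pmod 3$), but it buys a tighter constant: your crude Weil estimate gives $p+1\le 2(28+27)\sqrt p=110\sqrt p$, already close to the stated $106^2$, whereas the paper's direct comparison $p+1+218\sqrt p\ge 2(p+1-56\sqrt p)$ only yields $p+1\le 330\sqrt p$; the paper itself remarks afterward that the basic bounds were used for simplicity and that sharper ones are available. One minor point: for your contradiction you only need the easy inclusion $2J(\F_p)\subseteq\hat\psi_i(J_i(\F_p))$ (immediate from $[2]=\hat\psi_i\circ\psi_i$) for a single $i$, not the full equality $2J(\F_p)=\bigcap_i\hat\psi_i(J_i(\F_p))$ that you assert.
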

\begin{proof}
    To prove the first part of the claim, by Lemma \ref{lem t3p},
    we need to show
    $C_9 \times X_0(3)$ has an $\F_p$ rational point, however this
    follows from the Weil bound
    \[ \left|\left(\widetilde{C_9 \times X_0(3)}\right)(\F_p)\right| > p+1-2g\sqrt{p},\]
    where $g$ is the genus of $C_9 \times X_0(3)$.
    To calculate this genus, note that
    $\widetilde{C_9 \times X_0(3)} \rightarrow C_9$ is unramified of degree $4$,
    and we know that the genus of $C_9$ is $(9-1)(9-2)/2=28$.
    Therefore, using Riemann-Hurwitz's theorem we get that $g=109$.
    Therefore
    \[ \left|\left(\widetilde{C_9 \times X_0(3)}\right)(\F_p)\right| > p+1-218\sqrt{p},\]
    which means for $p>218^2$, the curve
    $\widetilde{C_9 \times X_0(3)}$ will have an $\F_p$ point. This finishes
    the proof of first part of the proposition.

    To prove the second part, assume that $0 \in \overline{\mathcal{T}_{3,p}}$.
    Then, by Lemma \ref{lem t3p} the map
    $\left(\widetilde{C_3 \times X_0(3)}\right)(\F_p) \rightarrow C_3(\F_p)$
    is not surjective. Assume that
    $\left(\widetilde{C_9 \times X_0(3)}\right)(\F_p) \rightarrow C_9(\F_p)$
    is surjective. We want to show that $p<106^2$.
    Let $P \in C_9(\F_p)$. Then we can find a point
    $Q \in \left(\widetilde{C_9 \times X_0(3)}\right)(\F_p)$ which maps to $P$.
    This implies that $\phi(P)$ is in the image of
    $\left(\widetilde{C_3\times X_0(3)}\right)(\F_p) \rightarrow C_3(\F_p)$.
    However, since this map is just multiplication by $2$, and since
    it is not surjective, there are either $2$ or $4$ points that map
    to $\phi(P)$. This implies that there are either $2$ or $4$ points
    in $\left(\widetilde{C_9 \times X_0(3)}\right)(\F_p)$ that map to $P$.
    Therefore
    \[ \left| \left(\widetilde{C_9 \times X_0(3)}\right) (\F_p)\right| \geq 2|C_9(\F_p)|.\]
    However, the Weil bound tell us that
    \begin{eqnarray*}
        |C_9(\F_p)| & \geq p+1- 2\cdot 28 \sqrt{p} \\
        \left|\left(\widetilde{ C_9 \times X_0(3)}\right)(\F_p)\right| & \leq p+1 +2 \cdot 109 \sqrt{p}.
    \end{eqnarray*}
    Using these estimates we get that $p< 106^2$, as desired.
\end{proof}

\begin{remark}
    For the proof of Proposition \ref{prop c9 heuristics}, we used the
    most basic bounds for simplicity of the exposition. However, since
    the curves we are using are fairly special, much better bounds are
    known.
\end{remark}
We can now readily find all primes $p$ such that $\overline{\mathcal{T}_{9,p}}
\neq \overline{\mathcal{T}_{3,p}}$. Table \ref{table t9 neq t3} collects
this data: the column labeled $p_0$ is the set of primes
such that $0 \in \overline{\mathcal{T}_{3,p}}$ but $0 \not \in \overline{\mathcal{T}_{9,p}}$,
and the column labeled $p_1$ is the set of primes such that
$\pm 1 \not \in \overline{\mathcal{T}_{9,p}}$.
For all such primes, we can check that
$a_p(f) \in \overline{\mathcal{T}_{9,p}}$ which proves that we can
not rule out $f$ using mod $3$ level lowering.

\begin{table}
\begin{center}
\begin{tabular}{c|c|c}
$(a,b,c)$ & $p_0$ & $p_1$
\\ \hline
$(5^2,2^4,23^4)$ & $\{73, 163\}$ & $\emptyset$ \\
$(5^8,2^4,37)$ & $\{73, 307, 541\}$ & $\{37\}$ \\
$(5^7,2^4,59^7)$ & $\{37, 73, 163, 181, 199, 541\}$ & $\emptyset$ \\
$(7,2^4,47^7)$ & $\emptyset$ & $\{109\}$ \\
$(11,2^4,5^2 \cdot 17^2)$ & $\{37, 73, 307, 541\}$ & $\emptyset$
\end{tabular}
\caption{Primes $p$ where $\overline{\mathcal{T}_{3,p}} \neq \overline{\mathcal{T}_{9,p}}$.}
\label{table t9 neq t3}
\end{center}
\end{table}

\bibliographystyle{amsplain}
\bibliography{LLModPrimePowers}

\end{document}